\numberwithin{equation}{section}
\DeclareMathOperator{\RE}{Re}
 \theoremstyle{plain}
\newtheorem{thm}{Theorem}[section]
\newtheorem{cor}[thm]{Corollary}
\newtheorem{lem}[thm]{Lemma}
\theoremstyle{definition}
\theoremstyle{remark}
\newtheorem{rem}[thm]{Remark}
\begin{document}

\title[Sharp bounds on  Coefficient functionals ......]{ Sharp bounds on  Coefficient functionals 
of certain Sakaguchi functions}

\author[S. Kumar]{Sushil Kumar}
\address{Bharati Vidyapeeth's College of Engineering, Delhi, India, 110 063}
\email{sushilkumar16n@gmail.com}

\author[R. K. pandey]{Rakesh Kumar pandey}
\address{Department of Mathematics, University  of Delhi, Delhi, India, 110007}
\email{rkpandey@maths.du.ac.in}

\author[P. Rai]{Pratima Rai*}
\address{Department of Mathematics, University  of Delhi, Delhi, India, 110007}
\email{ prai@maths.du.ac.in}

\textwidth=455pt \evensidemargin=8pt \oddsidemargin=8pt
\marginparsep=8pt \marginparpush=8pt \textheight=600pt
\topmargin=15pt
\parskip .3cm
\numberwithin{equation}{section}

\begin{abstract}
We determine sharp bounds on some Hankel determinants involving initial coefficients, inverse coefficients, and logarithmic  inverse coefficients for two subclasses of Sakaguchi  functions which are associated with the right half of the lemniscate of Bernoulli and the exponential function. 
Further, we compute sharp bounds on the second Hermitian-Toeplitz determinants involving logarithmic coefficients and logarithmic inverse coefficients. We also discuss invariant property for the obtained estimates with respect to various coefficients.

%
\end{abstract}

\subjclass[2010]{30C45, 30C50}
\keywords{Sakaguchi  functions; logarithmic inverse
coefficients;  logarithmic coefficients; inverse coefficients;  Hankel determinants and Hermitian-Toeplitz  determinants\\
* Corresponding Author}

\maketitle

\section{Introduction}
Let  $\mathcal{A}$ be   the class  of all normalized analytic functions $f$ of the form $f(z)=z+\sum_{n=2}^{\infty} a_n z^n$  defined on  $\mathbb{D}=\lbrace z\in \mathbb{C} : \vert z\vert <1\rbrace$ and $\mathcal{S}$ be the subclass of $\mathcal{A}$ consisting  of  schlicht functions. There are various  subclasses of the class $\mathcal{S}$ but many researchers paid attention to the classes of starlike and convex functions during 1920-1990 due to proof of Bieberbach conjecture~\cite{Duren}. In 1959, Sakaguchi~\cite{sakaguchi59} introduced the class of starlike functions with respect to symmetric points defined by $\mathcal{S}^*_S:=\{ f\in\mathcal{S}:\, \RE\left( {2zf'(z)}/{((f(z)-f(-z)))}\right) >0\}$; $( z\in\mathbb{D}).$ These functions are also known as Sakaguchi functions which are close-to-convex as well as univalent. Sakaguchi functions are interesting because they combine symmetry properties and starlike properties, making them a unique subclass of univalent functions. These functions deal with the geometric properties of analytic functions. Sakaguchi functions have applications in various areas of complex analysis, such as the theory of univalent functions, quasiconformal mappings, and the study of conformal maps between different regions in the complex plane. Conformal mappings and geometric transformations, related to the properties of starlike functions, can be used in computer graphics and image processing to map shapes, textures, and images in visually pleasing and efficient ways.
In 2004, Ravichandran~\cite{Ravi04} introduced a unified class $\mathcal{S}^*_S(\phi)$ of Sakaguchi functions which is given as
\[\mathcal{S}^*_S(\phi)=\left\{f\in \mathcal{S}: \frac{2zf'(z)}{(f(z)-f(-z))} \prec \phi(z); \,\, z\in \mathbb{D}\right\};\] where 
  $\phi(z)=1+\sum_{n=1}^{\infty}D_nz^n$ is univalent starlike function with respect to 1 which maps $\mathbb{D}$ onto a symmetric region with respect to real axis in the right half plane.   
The class $\mathcal{S}^*_S(\phi)$ consists of many subclasses of $\mathcal{S}^*_S$ which are associated with  bounded region in the right-half of lemniscate of Bernoulli, cosine hyperbolic function,  exponential function etc..  By taking $\phi(z)=e^z$ and $\sqrt{1+z}$, authors~\cite{khatter16} considered subclasses $\mathcal{S}_{S,e}^*$ and $\mathcal{S}_{S,L}^* $ of Sakaguchi  functions and  investigated  bounds  on the initial coefficients of the functions belonging to these classes. Analytically, 
\begin{align*}
 \mathcal{S}_{S,e}^* =\left\{ f\in S: \frac{2zf'(z)}{f(z)-f(-z)}\prec e^z \right \}  \,\,\mbox{and}\,\, \mathcal{S}_{S,L}^* =\left\{ f\in S: \frac{2zf'(z)}{f(z)-f(-z)}\prec \sqrt{1+z} \right \}
\end{align*}
for $z\in \mathbb{D}$. Shanmugam et al.~\cite{Shan06} gave a function
$K_{\phi_n}$ $(n=2,3,....)$ which belong to the class $\mathcal{S}^*_S(\phi)$ such that 
${2z K'_{\phi_n}(z)}/{(K_{\phi_n}(z)-K_{\phi_n}(-z))}=\phi(z^{n-1}),$ $ K_{\phi_n}(0)=K'_{\phi_n}(0)-1=0$. In view of the function $K_{\phi_n} \in \mathcal{S}^*_S(\phi)$, it is noted that the functions $f_1$ and $f_2$ defined as
\begin{equation}\label{eqt1}
f_1(z)=z+\frac{1}{2}z^3+\frac{1}{4}z^5+...\cdot
\end{equation}
and 
\begin{equation}\label{eqt2}
f_2(z)=z+\frac{1}{2}z^2+\frac{1}{4}z^3+\frac{5}{48}z^4+...
\end{equation}
belong to the class $\mathcal{S}_{S,e}^*$. Further, the functions $g_1$ and $g_2$ defined by
\begin{equation}\label{eqt7}
g_1(z)=z+\frac{1}{4}z^3+...\cdot 
\end{equation}
and 
\begin{equation}\label{eqt72}
g_2(z)=z+\frac{1}{4}z^2-\frac{1}{16}z^3+...\cdot
\end{equation}
belong to the class $\mathcal{S}_{S,L}^* $. Thus, it  is observed that the classes $\mathcal{S}_{S,e}^*$ and $\mathcal{S}_{S,L}^* $ are non-empty.

A coefficient functional is a linear functional that operates on the coefficients of a power series representation of an analytic function. By analyzing the behavior of these functionals, one can classify different classes of analytic functions, such as univalent functions, starlike functions, convex functions, and more.
Bounds on coefficients and coefficient functionals such as Fekete-Szeg\"o functional, various Hankel determinants, Hemitian-Toeplitz determinant  provide tools for understanding the intricate relationship between the algebraic properties of power series coefficients and the geometric properties of the corresponding analytic functions. 
If the function $f\in\mathcal{S},$ then the function $F$ which is  inverse of the function $f$ has following expansion
\begin{equation}\label{eqz}
F(w)=f^{-1}(w)=w+A_2w^2+A_3w^3+...
\end{equation}
or equivalently $f(F(w))=f(f^{-1}(w))=w$. Therefore, we have
\begin{equation}\label{eqz1}
w=(f^{-1}(w))+a_2(f^{-1}(w))^2+a_3(f^{-1}(w))^3+...
\end{equation}
In view of \eqref{eqz} and \eqref{eqz1}, the initial inverse coefficients become
\begin{align}
A_{2}=&-a_2, \label{eq11}\\
A_{3}=&-a_3+2a_2^2 \label{eq12},\\
A_{4}=&-a_4+5a_2a_3-5a_2^3,\label{eq13}\\
A_{5}=&-a_5+6a_2a_4-21a_2^2a_3+3a_3^2+14a_2^4.\label{eq14}
\end{align}
Ali \cite{Ali03} determined sharp estimates on inverse coefficients $A_2, A_3$ and Fekete-Szego functionals for the strongly starlike functions. For function $f\in\mathcal{S},$ 
the  logarithmic coefficients are defined  as
$log ({f(z)}/{z})=2\sum_{n=1}^{\infty}\gamma_nz^n,$ $z\in\mathbb{D}$.
Therefore, the initial logarithmic coefficients are given as
\begin{align}
\gamma_1=&\frac{1}{2}a_2 \label{eq2},\\
\gamma_2=&\frac{1}{2}(a_3-\frac{1}{2}a_2^2) \label{eq2a},\\
\gamma_3=&\frac{1}{2}(a_4-a_2a_3+\frac{1}{3}a_2^3)\label{eq2b}.
\end{align}
Further,   Ponnusamy et.al.~\cite{Ponnusamy}  discussed the logarithmic coefficients $\Gamma_n, n\in\mathbb{N}$ of inverse function $F$ defined by \eqref{eqz}, and given as 
\[log\bigg(\frac{F(w)}{w}\bigg)=2\sum_{n=1}^{\infty}\Gamma_nw^n,\quad  |w|<\frac{1}{4}.\] 
Here, $\Gamma_n$ are known as logarithmic inverse coefficients. The initial logarithmic  coefficient in term of inverse coefficients   are given as
\begin{align}
\Gamma_1=&\frac{1}{2}A_2 \label{eq40},\\
\Gamma_2=&\frac{1}{2}(A_3-\frac{1}{2}A_2^2) \label{eq41},\\
\Gamma_3=&\frac{1}{2}(A_4-A_2A_3+\frac{1}{3}A_2^3)\label{eq42}.
\end{align}
For $q,n\in\mathbb{N},$ the $q^{th}$ Hankel determinant involving initial coefficients of function $f\in \mathcal{A}$ is defined as 
\[H_{q,n}(f)=\begin{vmatrix}
a_{n} & a_{n+1} & \dots & a_{n+q-1}\\
a_{n+1}& a_{n+2} & \dots & a_{n+q}\\
\vdots & \vdots  & \dots & \vdots\\
a_{n+q-1}& a_{n+q} & \dots & a_{n+2(q-1)}
\end{vmatrix}\] 
where $a_1=1.$ In particular, for $q=2, n=2$ and $q=2, n=3$ we have
\begin{equation}\label{eq104}
H_{2,2}(f)=\begin{vmatrix}
a_{2} & a_{3} \\
a_{3}& a_{4} 
\end{vmatrix}=a_2a_4-a_3^2,
\quad
H_{2,3}(f)=\begin{vmatrix}
a_{3} & a_{4}\\
a_{4}& a_{5}
\end{vmatrix}=a_3a_5-a_4^2.
\end{equation}
Further, the $q^{th}$ Hankel determinant involving the coefficients of inverse function $F$ defined by \eqref{eqz}, is given as
\[H_{q,n}(f^{-1})=\begin{vmatrix}
A_{n} & A_{n+1} & \dots & A_{n+q-1}\\
A_{n+1}& A_{n+2} & \dots & A_{n+q}\\
\vdots & \vdots  & \dots & \vdots\\
A_{n+q-1}& A_{n+q} & \dots & A_{n+2(q-1)}
\end{vmatrix}.\] 
In particular, for $q=2, n=2$ and $q=2, n=3,$ we get
\begin{align}\label{eqegt}
H_{2,2}(f^{-1})=\begin{vmatrix}
A_{2} & A_{3} \\
A_{3}& A_{4} 
\end{vmatrix}=A_2A_4-A_3^2
\end{align}
\begin{align}\label{eqnin}
H_{2,3}(f^{-1})=\begin{vmatrix}
A_{3} & A_{4}\\
A_{4}& A_{5}
\end{vmatrix}=A_3A_5-A_4^2. 
\end{align}
In view of equation \eqref{eq11}, \eqref{eq12}, \eqref{eq13} and \eqref{eq14},  the expressions \eqref{eqegt} and \eqref{eqnin} become
\begin{align}
H_{2,2}(f^{-1})=&a_2a_4-a_3^2-a_2^2(a_3-a_2^2) \label{eq101}\\
H_{2,3}(f^{-1})=&a_3a_5-a_4^2-3a_3^3 \label{eq102}.
\end{align}
The $q^{th}$ Hankel determinant involving the logarithmic coefficient is given as
\[H_{q,n}(F_{f})=\begin{vmatrix}
\gamma_{n} & \gamma_{n+1} & \dots & \gamma_{n+q-1}\\
\gamma_{n+1}& \gamma_{n+2} & \dots & \gamma_{n+q}\\
\vdots & \vdots  & \dots & \vdots\\
\gamma_{n+q-1}& \gamma_{n+q} & \dots & \gamma_{n+2(q-1)}
\end{vmatrix}.\]
Specially,  for $q=2, n=1,$ we have
$H_{2,1}(F_{f}/2)=\begin{vmatrix}
\gamma_{1} & \gamma_{2} \\
\gamma_{2}& \gamma_{3} 
\end{vmatrix}=\gamma_1\gamma_3-\gamma_2^2.$ 
In terms of logarithmic inverse coefficient, we have  
\begin{align}
H_{2,1}(F_{f^{-1}}/2)=\begin{vmatrix}
\Gamma_{1} & \Gamma_{2} \\
\Gamma_{2}& \Gamma_{3} 
\end{vmatrix}=\Gamma_1\Gamma_3-\Gamma_2^2.
\end{align}
Further, in view of \eqref{eq11}, \eqref{eq12},  \eqref{eq13}, \eqref{eq40}, \eqref{eq41} and \eqref{eq42},  Hankel determinant $H_{2,1}(F_{f^{-1}}/2)$ becomes
\begin{align}\label{eqtwenty}
H_{2,1}(F_{f^{-1}}/2)=\frac{1}{48}\bigg(13a_2^4-12a_2^2a_3-12a_3^2+12a_2a_4\bigg).
\end{align}
For $q,n\in\mathbb{N},$  the Hermitian Toeplitz determinant for the function $f\in\mathcal{S}$ is defined as 
\[T_{q,n}(F_f)=\begin{vmatrix}
a_{n} & a_{n+1} & \dots & a_{n+q-1}\\
\bar{a}_{n+1}& a_{n} & \dots & a_{n+q-2}\\
\vdots & \vdots  & \dots & \vdots\\
\bar{a}_{n+q-1}& \bar{a}_{n+q-2} & \dots & a_{n}
\end{vmatrix},\] 
where $a_1=1$ and $\bar{a}_i=a_i, 1\leq i \leq n.$  In particular $T_{2,1}(F_{f})=
=1-|a_2|^2.$
Thus,  the second order Hermitian-Toeplitz determinant involving the logarithmic coefficient is written as
\[T_{2,1}(F_{f}/\gamma)=\begin{vmatrix}
\gamma_{1} & \gamma_{2} \\
\bar{\gamma}_{2}& \gamma_1 
\end{vmatrix}=\gamma_1^2-|\gamma_2|^2.\]
In view of equations \eqref{eq2} and \eqref{eq2a}, we have
\begin{equation}\label{eq105}
T_{2,1}(F_{f}/\gamma)=\frac{1}{16}(-a_2^4+4a_2^2+4a_2^2\Re a_3-4|a_3|^2).
\end{equation}
Further, the second order Hemitian Toeplitz determinanat involving the logarithmic inverse coefficients is written as 
\[T_{2,1}(F_{f^{-1}}/\Gamma)=\begin{vmatrix}
\Gamma_{1} & \Gamma_{2} \\
\bar{\Gamma}_{2}& \Gamma_1 
\end{vmatrix}=\Gamma_1^2-|\Gamma_2|^2.\]
In view of equations \eqref{eq40} and \eqref{eq41}, we have
\begin{equation}\label{eq107}
T_{2,1}(F_{f^{-1}}/\Gamma)=\frac{1}{16}(-A_2^4+4A_2^2+4A_2^2\Re A_3-4|A_3|^2).
\end{equation}
For more details, see~\cite{Hayman68,Girela2000,Roth2007,Kuc16,Kapoor2007}.

Initially, Pommerenke~\cite{Pomm66} investigated bounds on  Hankel determinants for the class $\mathcal{S}$. In~\cite{Lee13}, authors determined sharp bound on  second Hankel determinant of certain unified classes of univalent functions. Obradovic and Tuneshki~\cite{Tuneski23} determined sharp upper bounds for second order Hankel determinant for inverse functions of certain univalent functions. Authors~\cite{bog22} investigated  sharp bounds of the second Hankel determinant involving  logarithmic coefficients for the class of starlike and convex functions of order $\alpha$.
In \cite{Ponnusamy}, authors computed sharp bound on $\Gamma_n$ for the class $\mathcal{S}$ and its some well known subclasses. Further, the sharp bounds on second Hankel determinant $H_{2,1}(F_{f^{-1}}/2)$ involving logarithmic inverse coefficients for the starlike and convex functions were computed in \cite{Amal}.  Authors~\cite{vallu} determined sharp bound on
$H_{2,1}(F_{f}/2)$ for the classes of starlike and convex functions with respect to symmetric points.  Recently, Mandal et.al. \cite{mandal23} determined best possible bounds on second Hankel and Hemitian Toeplitz determinants involving logarithmic inverse
coefficients for the starlike and convex functions with respect to symmetric points.
Cudna et.al. \cite{Cudna} computed best possible estimates on $T_{2,1}(F_f)$ and $T_{3,1}(F_f)$ for starlike and convex functions of order $\alpha.$ $(\alpha\in [0,1)).$
In \cite{Tuneski} authors obtained sharp bounds on  $T_{2,1}(F_f)$ and $T_{3,1}(F_f)$ for class $\mathcal{S}$ and its subclasses.
For more details, see~\cite{Jast,sushilhj21,rai23}.

Motivated by the above discussed literature on Hankel as well as Hemitian Toeplitz determinants, in this paper, we investigate bounds on $|H_{2,1}(F_{f^{-1}}/2)|,$ $|H_{2,2}(f^{-1})|,$ $|H_{2,3}(f^{-1})|,$  $|H_{2,2}(f|,$ $|H_{2,2}(f^{-1})-H_{2,2}(f)|$ and $|H_{2,3}(f^{-1})-H_{2,3}(f)|$ for the functions $f$ belonging to the classes $\mathcal{S}_{S,e}^*$ and $\mathcal{S}_{S,L}^*$ respectively. Further, we determine bound on $T_{2,1}(	F_{f}/\gamma)$ and $T_{2,1}(	F_{f^{-1}}/\Gamma)$ for such classes. All obtained bounds are sharp. In addition, the invariant property is also discussed. 

\section{Hankel Determinants}
In 2022, Zaprawa~\cite{Zaprawa} computed sharp bounds on Zalcman functional $a_5-a_3^2$ and  Hankel determinants $H_{2,2}(f)$ and $H_{2,3}(f)$ for functions $f\in \mathcal{S}_{S,e}^*$.   
In this section, we determine the sharp estimate on second Hankel determinants with various coefficients  for the functions $f$ belonging to the classes $\mathcal{S}_{S,e}^*$ and $\mathcal{S}_{S,L}^*$ respectively. 
The following lemmas play an important role to demonstrate main results in this paper.
\begin{lem}\label{eqlem3}\cite{Sur1940}
Let $w(z)=c_1z+c_2z^2+c_3z^3+c_4z^4+...$ be a Schwarz function. Then 
\[|c_1|\leq1,\quad|c_2|\leq1-|c_1|^2,\quad|c_3|\leq1-|c_1|^2-\frac{|c_2|^2}{1+|c_1|},\quad|c_4|\leq1-|c_1|^2-|c_2|^2.\]
\end{lem}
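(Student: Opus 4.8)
The natural route is the \emph{Schur algorithm} (iterated Schwarz lemma), which produces from $w$ a sequence of Schur parameters, each of modulus at most one, from which the four inequalities fall out by comparing Taylor coefficients. First I would note that, since $w(0)=0$, the Schwarz lemma shows $\Omega_0(z):=w(z)/z=c_1+c_2z+c_3z^2+c_4z^3+\cdots$ is analytic and maps $\mathbb{D}$ into $\overline{\mathbb{D}}$; evaluating at $z=0$ gives $|c_1|\le 1$. If $|c_1|=1$ the maximum principle forces $\Omega_0$ to be constant, so $c_2=c_3=c_4=0$ and every inequality is trivial; hence I may assume $|c_1|<1$.

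For the second and third bounds I would apply the Schur transform once. Since $\Omega_0(0)=c_1$, the function $\frac{\Omega_0(z)-c_1}{1-\overline{c_1}\,\Omega_0(z)}$ is a self-map of $\mathbb{D}$ vanishing at the origin, so by the Schwarz lemma it equals $z\,\Omega_1(z)$ with $\Omega_1\colon\mathbb{D}\to\overline{\mathbb{D}}$ analytic. Expanding the quotient as a power series and writing $D:=1-|c_1|^2$, a short computation gives
\[
\Omega_1(0)=\frac{c_2}{D},\qquad \Omega_1'(0)=\frac{c_3}{D}+\frac{\overline{c_1}\,c_2^{\,2}}{D^{2}}.
\]
The bound $|\Omega_1(0)|\le 1$ immediately yields $|c_2|\le 1-|c_1|^2$. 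For $c_3$ I would invoke the Schwarz--Pick inequality at the origin, $|\Omega_1'(0)|\le 1-|\Omega_1(0)|^2$; multiplying through by $D$ and using the triangle inequality gives $|c_3|\le D-|c_2|^2/D+|c_1|\,|c_2|^2/D$, and since $D=(1-|c_1|)(1+|c_1|)$ the two fractions combine to the stated $|c_3|\le 1-|c_1|^2-|c_2|^2/(1+|c_1|)$.

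The fourth inequality is the real obstacle, because its clean form $1-|c_1|^2-|c_2|^2$ is \emph{not} the raw output of one more Schur step. A clean partial result comes from splitting off the even part: since $w(-z)=-c_1z+c_2z^2-c_3z^3+c_4z^4-\cdots$, the function $E(z):=\tfrac12\bigl(w(z)+w(-z)\bigr)=c_2z^2+c_4z^4+\cdots$ satisfies $|E(z)|\le 1$ and vanishes at $0$, so $u(\zeta):=c_2\zeta+c_4\zeta^2+\cdots$ is itself a Schwarz function; the already-proved second bound applied to $u$ gives $|c_4|\le 1-|c_2|^2$. Upgrading this to the sharp $|c_4|\le 1-|c_1|^2-|c_2|^2$ requires feeding in the extra information carried by $c_1$: I would expand the first Schur transform one order further to express the third Taylor coefficient of $\Omega_1$ through $c_1,\dots,c_4$, then run the Schur step a second time on $\Omega_1$ and estimate the resulting parameter, eliminating the $c_3$-dependence by careful use of the triangle inequality (Carlson's argument). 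This bookkeeping, together with verifying that the estimate cannot be improved---sharpness being exhibited by Blaschke-type maps such as $w(z)=z\,(c_1+z^3)/(1+\overline{c_1}\,z^3)$, for which $c_2=c_3=0$ and $c_4=1-|c_1|^2$---is where essentially all the difficulty of the lemma is concentrated.
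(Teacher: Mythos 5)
The paper itself offers no proof of this lemma --- it is quoted directly from Carlson \cite{Sur1940} --- so your attempt has to be judged on its own merits rather than against an argument in the text. On those merits, your treatment of the first three inequalities is correct and complete: the Schwarz-lemma step for $|c_1|\le 1$, the computation of the first Schur transform giving $\Omega_1(0)=c_2/D$ and $\Omega_1'(0)=c_3/D+\overline{c_1}\,c_2^2/D^2$ with $D=1-|c_1|^2$, and the Schwarz--Pick estimate, which indeed recombines via $D=(1-|c_1|)(1+|c_1|)$ into the stated bound for $|c_3|$, are all sound, as is the even-part argument giving the partial bound $|c_4|\le 1-|c_2|^2$.

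The genuine gap is the fourth inequality, which you yourself call ``the real obstacle'': you never prove it, and the plan you sketch does not work as described. Carrying out your recipe, the second Taylor coefficient of $\Omega_1$ is
\[
d_2=\frac{1}{D}\Bigl(c_4+\frac{2\overline{c_1}\,c_2c_3}{D}+\frac{\overline{c_1}^{\,2}c_2^3}{D^2}\Bigr),
\]
and the second Schur step gives $|d_2|\le 1-|d_0|^2-|d_1|^2/(1+|d_0|)$ with $d_0=c_2/D$ and $d_1=c_3/D+\overline{c_1}\,c_2^2/D^2$. Now try to ``eliminate the $c_3$-dependence by the triangle inequality'': in the adversarial phase alignment $c_3=-\overline{c_1}\,c_2^2/D$ one has $d_1=0$ and $|c_3|=|c_1|\,|c_2|^2/D$, and the resulting chain of estimates yields only
\[
|c_4|\le D-\frac{|c_2|^2}{D}+\frac{3|c_1|^2|c_2|^3}{D^2},
\]
which exceeds the target $1-|c_1|^2-|c_2|^2=D-|c_2|^2$ by exactly $\frac{|c_1|^2|c_2|^2}{D^2}\bigl(3|c_2|-D\bigr)$, a strictly positive quantity whenever $|c_2|>D/3$ (and $|c_2|$ may be as large as $D$). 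So a greedy application of the triangle inequality after a second Schur step provably cannot produce the sharp constant; Carlson's argument must retain the phase information that your reduction discards. Since you explicitly leave this step as unexecuted ``bookkeeping'' while acknowledging that it is where all the difficulty lies, the proposal establishes only the first three inequalities together with the weaker bound $|c_4|\le 1-|c_2|^2$, and the fourth inequality remains unproved.
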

\begin{lem}\label{lem1}\cite{Libera1983}
Let $\mathcal{P}$ be the class of analytic functions having the Taylor series of the form
\begin{equation}\label{pf}
p(z)=1+p_1z+p_2z^2+p_3z^3+\cdots
\end{equation}
satisfying the condition $\RE p(z)>0\;(z\in\mathbb{D})$. Then
\begin{align*}
2p_2=&p_1^2+t\xi,\\
4p_3=&p_1^3+2p_1t\xi-p_1t\xi^2+2t(1-|\xi|^2)\eta,\\
8p_4=&p_1^4+3p_1^2t\xi+(4-3p_1^2)t\xi^2+p_1^2t\xi^3+4t(1-|\xi|^2)(1-|\eta|^2)\gamma\\
&\qquad\qquad\,\,+4t(1-|\xi|^2)(p_1\eta-p_1\xi\eta-\bar{\xi}\eta^2),
\end{align*}
for some $\xi ,\eta,\gamma\in \overline{\mathbb{D}}$ and  $t=(4-p_1^2).$
\end{lem}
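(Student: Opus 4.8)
The plan is to transfer the entire problem to the coefficient theory of Schwarz functions, where the factors $1-|\cdot|^2$ and the closed-disk parameters arise naturally. First I would use the classical correspondence: for $p\in\mathcal{P}$ with $p(0)=1$, the function
\[\omega(z)=\frac{p(z)-1}{p(z)+1}=c_1z+c_2z^2+c_3z^3+c_4z^4+\cdots\]
is a Schwarz function, and conversely every Schwarz function arises this way via $p=(1+\omega)/(1-\omega)$. Expanding $p=1+2\sum_{n\ge1}\omega^n$ and collecting powers of $z$ expresses each $p_k$ as a universal polynomial in $c_1,\dots,c_k$; a direct computation gives $p_1=2c_1$, $p_2=2(c_2+c_1^2)$, $p_3=2(c_3+2c_1c_2+c_1^3)$ and $p_4=2(c_4+2c_1c_3+c_2^2+3c_1^2c_2+c_1^4)$. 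Since the class $\mathcal{P}$ is invariant under the rotation $p(z)\mapsto\overline{e^{i\theta}}\,p(e^{i\theta}z)$, I may assume $p_1\in[0,2]$, so that $c_1=p_1/2\in[0,1]$ is real and $t=4-p_1^2=4(1-c_1^2)$.

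The key input is the Schur parametrization of the Schwarz coefficients. Iterating the Schwarz lemma on $\omega(z)/z$ (equivalently, running the Schur algorithm) produces parameters $\xi,\eta,\gamma\in\overline{\mathbb{D}}$ such that
\[c_2=(1-c_1^2)\,\xi,\qquad c_3=(1-c_1^2)\big[(1-|\xi|^2)\eta-c_1\xi^2\big],\]
together with a longer but structurally identical expression for $c_4$ in which the factor $(1-c_1^2)$ is again pulled out and the new parameter $\gamma$ enters multiplied by $(1-|\xi|^2)(1-|\eta|^2)$. These are exactly the identities underlying the coefficient bounds recorded in Lemma~\ref{eqlem3}: the assertions $|c_2|\le1-|c_1|^2$, $|c_3|\le\cdots$ are precisely the statements that $|\xi|,|\eta|,|\gamma|\le1$, so the existence of the parameters in $\overline{\mathbb{D}}$ is guaranteed.

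Finally I would substitute these parametrizations into the formulas for $p_2,p_3,p_4$ and simplify, replacing $1-c_1^2$ by $t/4$ and $c_1$ by $p_1/2$. For $p_2$ this is immediate: $2p_2=4c_2+4c_1^2=p_1^2+t\xi$. For $p_3$, inserting the parametrizations of $c_2,c_3$ into $8c_3+16c_1c_2+8c_1^3$ and regrouping yields $4p_3=p_1^3+2p_1t\xi-p_1t\xi^2+2t(1-|\xi|^2)\eta$. The computation for $8p_4$ is of the same nature but much longer, and I expect it to be the main obstacle: one must carry the full $c_4$ expression, keep track of the cross term $c_2^2=(1-c_1^2)^2\xi^2$ and the mixed product $c_1c_3$, and check that all the $\xi^2$ contributions cancel so that everything collapses into the four displayed blocks, in particular the final bracket $p_1\eta-p_1\xi\eta-\bar{\xi}\eta^2$. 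Apart from this bookkeeping the argument is mechanical; the only genuine analytic content is the existence of the Schur parameters in the closed disk, which rests on the Schwarz lemma.
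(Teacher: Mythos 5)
The paper never proves this lemma: it is quoted verbatim from the literature (the $p_2,p_3$ formulas are Libera--Zlotkiewicz's; the $p_4$ formula is in fact due to Kwon, Lecko and Sim, not to the cited 1983 paper). So your proposal is a reconstruction rather than an alternative to a proof in the paper, and its skeleton is the standard one. The core is sound: the transfer $p=(1+\omega)/(1-\omega)$, the formulas $p_1=2c_1$, $p_2=2(c_2+c_1^2)$, $p_3=2(c_3+2c_1c_2+c_1^3)$, $p_4=2(c_4+2c_1c_3+c_2^2+3c_1^2c_2+c_1^4)$, and the Schur parametrizations $c_2=(1-c_1^2)\xi$, $c_3=(1-c_1^2)[(1-|\xi|^2)\eta-c_1\xi^2]$ (for real $c_1$) are all correct, and your verification of the $2p_2$ and $4p_3$ identities goes through exactly as written.

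Two genuine problems remain. First, the reduction ``I may assume $p_1\in[0,2]$'' is not a harmless normalization here. (Minor: the map you write, $\overline{e^{i\theta}}\,p(e^{i\theta}z)$, is not in $\mathcal{P}$; the correct rotation is $p(e^{i\theta}z)$, with coefficients $e^{in\theta}p_n$.) The substantive issue is that the conclusion of the lemma is not rotation-covariant: $\xi$ enters both analytically (as $\xi,\xi^2,\xi^3$) and anti-analytically (as $|\xi|^2$), so the identity proved for $p(e^{i\theta}z)$ cannot be rotated back into the identity for $p$. Indeed the statement is \emph{false} for non-real $p_1$: take $\omega(z)=z(z+ic)/(1-icz)$ with $0<c<\sqrt{2}-1$, so $c_1=ic$, $c_2=1-c^2$, $c_3=ic(1-c^2)$, hence $p_1=2ic$ and $t=4+4c^2$; the $p_2$-identity then forces $\xi=(1-c^2)/(1+c^2)$, and the $p_3$-identity forces $\eta=i(1-c^2)/(2c)$, whose modulus exceeds $1$. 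So the lemma must be read, as this paper in fact applies it, with $p_1\in[0,2]$; your argument proves exactly that statement, and the WLOG sentence should be replaced by this hypothesis rather than presented as a reduction of the general case.

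Second, the $8p_4$ identity—the one part not already in Libera--Zlotkiewicz—is precisely where you stop, and it is not pure bookkeeping until the $c_4$ parametrization is actually written down. Two further Schur steps give
\[c_4=(1-c_1^2)\Bigl[c_1^2\xi^3+(1-|\xi|^2)\bigl((1-|\eta|^2)\gamma-2c_1\xi\eta-\bar{\xi}\eta^2\bigr)\Bigr],\]
and with $c_1=p_1/2$, $1-c_1^2=t/4$ the collapse does occur: $16c_1^4=p_1^4$; $48c_1^2c_2=3p_1^2t\xi$; $16c_2^2=(4-p_1^2)t\xi^2$ and $32c_1c_3=4p_1t(1-|\xi|^2)\eta-2p_1^2t\xi^2$ (these two produce the cancellation you anticipated, yielding $(4-3p_1^2)t\xi^2$); finally $16c_4=p_1^2t\xi^3+4t(1-|\xi|^2)(1-|\eta|^2)\gamma-4t(1-|\xi|^2)(p_1\xi\eta+\bar{\xi}\eta^2)$, whose last term combines with $4p_1t(1-|\xi|^2)\eta$ to give the bracket $4t(1-|\xi|^2)(p_1\eta-p_1\xi\eta-\bar{\xi}\eta^2)$. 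You should also record the degenerate cases ($|c_1|=1$, $|\xi|=1$ or $|\eta|=1$), where the Schur algorithm terminates and the remaining parameters may be chosen arbitrarily in $\overline{\mathbb{D}}$ because the factors multiplying them vanish; with these additions your plan is a complete and correct proof of the lemma as it is used in the paper.
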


\begin{thm}\label{eqthm1}
If the function $f(z)=z+\sum_{n=2}^{\infty}a_nz^n\in \mathcal{S}_{S,e}^*,$   then 
\[|H_{2,1}(F_{f^{-1}}/2)|\leq\frac{1}{16}.\]
The inequality is sharp.
\end{thm}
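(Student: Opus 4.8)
The plan is to pass from the subordination defining $\mathcal{S}_{S,e}^*$ to an explicit parametrization of $a_2,a_3,a_4$ by the coefficients of a Schwarz function, substitute into the formula \eqref{eqtwenty} for $H_{2,1}(F_{f^{-1}}/2)$, and bound the resulting polynomial using Lemma \ref{eqlem3}. Since $f\in\mathcal{S}_{S,e}^*$, there is a Schwarz function $w(z)=c_1z+c_2z^2+c_3z^3+\cdots$ with $2zf'(z)/(f(z)-f(-z))=e^{w(z)}$. Writing $f(z)-f(-z)=2(z+a_3z^3+\cdots)$ and $2zf'(z)=2(z+2a_2z^2+3a_3z^3+4a_4z^4+\cdots)$, I would expand the quotient on the left as a power series, expand $e^{w(z)}$ up to order $z^3$, and match coefficients. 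This yields, after a short computation,
\[
a_2=\frac{c_1}{2},\qquad a_3=\frac{c_2}{2}+\frac{c_1^2}{4},\qquad a_4=\frac{c_3}{4}+\frac{3c_1c_2}{8}+\frac{5c_1^3}{48}.
\]

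Next I would substitute these into \eqref{eqtwenty}. A direct calculation collapses the quartic expression $13a_2^4-12a_2^2a_3-12a_3^2+12a_2a_4$ into
\[
H_{2,1}(F_{f^{-1}}/2)=\frac{1}{768}\bigl(24c_1c_3-c_1^4-36c_1^2c_2-48c_2^2\bigr),
\]
so it suffices to prove $|24c_1c_3-c_1^4-36c_1^2c_2-48c_2^2|\leq 48$. The key observation is that at the expected extremizer all terms carrying a factor of $c_1$ vanish, which suggests that a plain triangle-inequality estimate should already be sharp.

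I would therefore apply the triangle inequality and then Lemma \ref{eqlem3} with $c:=|c_1|$ and $x:=|c_2|$, using $|c_2|\leq 1-c^2$ and $|c_3|\leq 1-c^2-x^2/(1+c)$:
\[
|24c_1c_3-c_1^4-36c_1^2c_2-48c_2^2|\leq 24c\Bigl(1-c^2-\frac{x^2}{1+c}\Bigr)+c^4+36c^2x+48x^2=:G(c,x).
\]
Collecting the $x^2$ terms gives coefficient $48-24c/(1+c)=24(2+c)/(1+c)>0$, so $G$ is increasing in $x$ on $[0,1-c^2]$; evaluating at the endpoint $x=1-c^2$ and simplifying reduces $G$ to the single-variable expression $48-36c^2-11c^4\leq 48$ for $c\in[0,1]$. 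This proves the bound, with equality forcing $c=0$ and $x=1$.

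Finally, for sharpness I would exhibit the extremal choice $w(z)=z^2$ (that is, $c_2=1$ and all other $c_n=0$), which corresponds to the function $f_1$ in \eqref{eqt1}; for it one checks directly that $H_{2,1}(F_{f^{-1}}/2)=-1/16$, so the value $|H_{2,1}(F_{f^{-1}}/2)|=1/16$ is attained. The only genuine obstacle is the bookkeeping in the coefficient matching together with the verification that the relaxed functional $G$ does not exceed $48$; the latter is not automatic a priori, but works out because the residual one-variable polynomial $48-36c^2-11c^4$ is manifestly bounded by $48$.
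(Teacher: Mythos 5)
Your proposal is correct and follows essentially the same route as the paper: the same Schwarz-function parametrization $a_2=c_1/2$, $a_3=(c_1^2+2c_2)/4$, $a_4=(5c_1^3+18c_1c_2+12c_3)/48$, the same identity $H_{2,1}(F_{f^{-1}}/2)=\tfrac{1}{768}(24c_1c_3-c_1^4-36c_1^2c_2-48c_2^2)$, and the same triangle-inequality estimate via Lemma \ref{eqlem3}, with sharpness from $f_1$ (i.e.\ $w(z)=z^2$). The only difference is cosmetic: you exploit monotonicity of $G$ in $x=|c_2|$ to jump straight to the boundary curve $x=1-c^2$, whereas the paper checks the interior and the three boundary pieces separately, but both reduce to the same polynomial bound $48-36c^2-11c^4\leq 48$.
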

\begin{proof}
If the function $f\in\mathcal{S}_{S,e}^*,$ then
\begin{equation}\label{eq5}
\frac{2zf'(z)}{f(z)-f(-z)}=e^{w(z)},
\end{equation}
where $w(z)=c_1z+c_2z^2+c_3z^3+c_4z^4+...$ is a Schwarz function defined on $\mathbb{D}$. On writing 
the Taylors series expansion of expression \eqref{eq5}, we get
\begin{align}\label{eq6}
1+2a_2z+2a_3z^2&+(-2a_2a_3+4a_4)z^3+(-2a_3^2+4a_5)z^4+...=1+c_1z+(\frac{c_1^2}{2}+c_2)z^2\notag\\&+(\frac{c_1^3}{6}+c_1c_2+c_3)z^3+\frac{1}{24}(c_1^4+12c_1^2c_2+12c_2^2+24c_1c_3+24c_4)z^4+\dots. 
\end{align}
On comparing the coefficients of identical  powers of $z$  in the expression \eqref{eq6},  we obtain 
\begin{align}
a_{2 }= &\frac{c_{1}}{2}, \label{eq7}\\
a_{3 }=&\frac{1}{4}(c_1^2+2c_2), \label{eq8}\\
a_{4 }= &\frac{1}{48}(5c_1^3+18c_1 c_2+12c_3), \label{eq9}\\
a_{5 }=& \frac{1}{24}(c_1^4+6c_1^2c_2 + 6c_2^2+6c_1c_3+6c_4). \label{eq10}
\end{align}
 On putting the values of $a_2, a_3$ and $a_4$ from  \eqref{eq7}, \eqref{eq8} and
 \eqref{eq9} in  \eqref{eqtwenty}, we have 
$$H_{2,1}(F_{f^{-1}}/2)=\frac{1}{768}(-c_1^4-48c_2^2-36c_1^2c_2+24c_1c_3).$$
On applying triangle inequality and Lemma \ref{eqlem3}, we get
\begin{align*}
|H_{2,1}(F_{f^{-1}}/2)|&\leq\frac{1}{768}(|c_1|^4+48|c_2|^2+36|c_1|^2|c_2|+24|c_1|\bigg(1-|c_1|^2-\frac{|c_2|^2}{1+|c_1|}\bigg)\notag\\&=\chi(|c_1|,|c_2|).
\end{align*}
Next, on setting $|c_1|=x, |c_2|=y,$ we get
$$\chi(x,y)=\frac{1}{768}\bigg(x^4+48y^2+36x^2y+24x\left(1-x^2-\frac{y^2}{1+x}\right)\bigg).$$
 Now, we will find  the maximum value of the function $\chi(x,y)$ over the region $\Lambda=\{(x,y):0\leq x \leq 1, 0\leq y \leq 1-x^2\}.$ Since ${\partial \chi}/{\partial y}=\bigg(48y(2-\frac{x}{1+x})+36x^2\bigg)/768,$ which has no solution in the interior of $\Lambda.$ The function $\chi$ has no maximum value in the interior of $\Lambda.$
Next, we consider the boundary of $\Lambda$ such that 
\begin{align*}
\chi(0,y)&=\frac{48}{768}y^2\leq\frac{1}{16},\quad 0\leq y\leq1,\\
\chi(x,0)&=\frac{1}{768}(x^4-24x^3+24x)\leq0.0122, \quad 0\leq x\leq1,\\
\chi(x,1-x^2)&=\frac{1}{768}(-11x^4-36x^2+48)\leq\frac{1}{16},\quad 0\leq x\leq1.
\end{align*}
Therefore, we obtain the  best possible estimate 1/16 on $|H_{2,1}(F_{f^{-1}}/2)|.$ The function $f_1$ defined by \eqref{eqt1}
works as an extremal function for the inequality $|H_{2,1}(F_{f^{-1}}/2)|\leq\frac{1}{16}.$\qedhere
\end{proof}
\begin{rem}
From ~\cite{kumar23} and Theorem \ref{eqthm1}, it is noted that the bounds on  $|\gamma_1\gamma_3-\gamma_2^2|$ and $|\Gamma_1\Gamma_3-\Gamma_2^2|$ are invariant for the function $f\in\mathcal{S}_{S,e}^*.$\qedhere
\end{rem}

\begin{thm}\label{eqthrm3}
If the function $f(z)=z+\sum_{n=2}^{\infty}a_nz^n\in \mathcal{S}_{S,e}^*,$ then 
\[|H_{2,2}(f^{-1})|\leq\frac{1}{4}.\]
The inequality is sharp.
\end{thm}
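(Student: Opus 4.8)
The plan is to follow the same machinery that succeeded in Theorem \ref{eqthm1}. First I would express $H_{2,2}(f^{-1})$ in terms of the Schwarz coefficients $c_1,c_2,c_3,c_4$. The key identity \eqref{eq101} already rewrites $H_{2,2}(f^{-1})=a_2a_4-a_3^2-a_2^2(a_3-a_2^2)$ purely in terms of $a_2,a_3,a_4$, so substituting the expressions \eqref{eq7}, \eqref{eq8}, \eqref{eq9} for $a_2,a_3,a_4$ gives a polynomial in $c_1,c_2,c_3$. I expect the result to take the form $H_{2,2}(f^{-1})=\frac{1}{c}\bigl(\alpha c_1^4+\beta c_1^2 c_2+\gamma c_2^2+\delta c_1 c_3\bigr)$ for explicit constants.

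At that point there are two natural routes. The direct route, mirroring the proof of Theorem \ref{eqthm1}, is to apply the triangle inequality together with Lemma \ref{eqlem3}, bounding $|c_3|\le 1-|c_1|^2-|c_2|^2/(1+|c_1|)$ and $|c_2|\le 1-|c_1|^2$, to obtain a two-variable function $\chi(x,y)$ on the triangle $\Lambda=\{(x,y):0\le x\le 1,\ 0\le y\le 1-x^2\}$ with $x=|c_1|$, $y=|c_2|$. I would then check for interior critical points via $\partial\chi/\partial y$ and $\partial\chi/\partial x$, and if none yield a larger value, maximize $\chi$ over the three boundary edges $x=0$, $y=0$, and $y=1-x^2$. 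Alternatively, since the determinant is not sign-definite, it may be cleaner to use the Libera--Zlotkiewicz-type parametrization of Lemma \ref{lem1} after converting the Schwarz function $w$ to a Carath\'eodory function $p=(1+w)/(1-w)$; this replaces the triangle-inequality slack with exact formulas for $p_2,p_3$ in terms of $p_1,\xi,\eta$, and reduces the problem to maximizing over $p_1\in[0,2]$ and $|\xi|,|\eta|\le 1$. I would try the direct route first because it matched the structure of the preceding theorem.

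The hard part will be the boundary/critical-point analysis once the sign-indefiniteness of the functional is taken into account: unlike a purely positive combination, here the $a_2a_4$ and $-a_3^2-a_2^2 a_3+a_2^4$ terms can partially cancel, so the triangle inequality may be lossy and the genuine extremum may sit in the interior rather than on $\partial\Lambda$. If the naive triangle-inequality bound overshoots $1/4$, I would retain the true sign of $c_1 c_3$ and instead of bounding $|c_3|$ crudely, substitute the exact Lemma \ref{lem1} value for $4p_3$, then optimize the resulting expression in the free disk parameters; the term $2t(1-|\xi|^2)\eta$ contributes linearly in $\eta$, so the extremum in $\eta$ is attained on $|\eta|=1$, collapsing the problem to two real variables.

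Finally I would identify the extremal function. The bound $1/4$ strongly suggests that the extremizer is again one of the explicit functions already exhibited, most plausibly $f_1$ from \eqref{eqt1} (for which the odd structure forces $c_1=0$ and concentrates the determinant in the $c_2^2$ term), so I would verify that $f_1$ attains $|H_{2,2}(f^{-1})|=1/4$ to confirm sharpness, exactly as $f_1$ served as the extremal function in Theorem \ref{eqthm1}.
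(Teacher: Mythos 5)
Your proposal is correct and follows essentially the same route as the paper: substitute the Schwarz-coefficient expressions for $a_2,a_3,a_4$ into $H_{2,2}(f^{-1})=a_2a_4-a_3^2-a_2^2(a_3-a_2^2)$, apply the triangle inequality together with the Schwarz-coefficient bounds of Lemma \ref{eqlem3}, and maximize the resulting two-variable function over $\Lambda=\{(x,y):0\le x\le 1,\ 0\le y\le 1-x^2\}$, where the maximum $1/4$ occurs on the boundary and sharpness comes from $f_1$ of \eqref{eqt1} (for which $c_1=0$, $c_2=1$). The Libera--Zlotkiewicz fallback you sketch is not needed, since the direct triangle-inequality route is not lossy here, exactly as in the paper.
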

\begin{proof}
Since $f\in \mathcal{S}_{S,e}^*,$ on substituting the value of $a_2, a_3$ and $a_4$ from  \eqref{eq7}, \eqref{eq8} and \eqref{eq9} respectively in expression \eqref{eq101}, we get 
\[H_{2,2}(f^{-1})=\frac{1}{96}(-c_1^4-18c_1^2 c_2-24c_2^2+12c_1 c_3).\]
On applying Lemma \eqref{eqlem3}, the above expression becomes
$$|H_{2,2}(f^{-1})|\leq\frac{1}{96}\bigg(|c_1|^4+18|c_1|^2|c_2|+24|c_2|^2+12|c_1|\left(1-|c_1|^2-\frac{|c_2|^2}{1+|c_1|}\right)\bigg).$$
On letting $x=|c_1|$ and $y=|c_2|,$ we have
 $|H_{2,2}(f^{-1})|\leq\Upsilon(x,y),$
where \[\Upsilon(x,y)=\frac{1}{96}\bigg(x^4+18x^2y+24y^2+12x\left(1-x^2-\frac{y^2}{1+x}\right)\bigg).\] We determine the maximum value of the function $\Upsilon$ over the region $\Lambda=\{(x,y):0\leq x \leq 1, 0\leq y \leq 1-x^2\}.$
We consider the two cases on $\Lambda,$ which are discussed as
\begin{itemize}
\item [$(A_1)$] The solution of the system of equations ${\partial \Upsilon}/{\partial x}=0$ and ${\partial \Upsilon}/{\partial y}=0$ gives a critical point of the function $\Upsilon.$ On solving  ${\partial \Upsilon}/{\partial y}=18x^2+48y-({24xy}/{(1+x)})=0,$ we get
\[18x^2+24y\left(2-\frac{x}{1+x}\right)=0\] which has no solution for $0<x<1$ and $0<y<1-x^2.$ Therefore, the function has no maximum value.
\item [$(A_2)$] The continuity of the function $\Upsilon$ on the compact region $\Lambda$ ensure that the  maximum value of $\Upsilon$ is attained at boundary of $\Lambda.$
\begin{enumerate}
\item [(i)]$\Upsilon(0,y)={24y^2}/{96}\leq{1}/{4}$ for all $0\leq y\leq1.$
\item[(ii)]$\Upsilon(x,0)=(x^4-12x^3+12x)/96\leq0.049$ for all $0\leq x\leq1.$
\item[(iii)]$\Upsilon(x,1-x^2)=(-5x^4-18x^2+24)/96\leq{1}/{4}$ for all $0\leq x\leq1.$
\end{enumerate}
\end{itemize}
From the cases $(A_1)$ and $(A_2),$ we get the desired  estimate  on $|H_{2,2}(f^{-1})|.$ Sharpness follows for the function $f_1$ defined by \eqref{eqt1}.\qedhere
\end{proof}
\begin{rem}
In view of \cite{Zaprawa} Theorem-4, pp.-17 and  Theorem \ref{eqthrm3}, the bound on $|H_{2,2}(f)|$ and $|H_{2,2}(f^{-1})|$ are invariant for the function $f\in\mathcal{S}_{S,e}^*.$\qedhere
\end{rem}
\begin{cor}
If $f(z)=z+\sum_{n=2}^{\infty}a_nz^n\in \mathcal{S}_{S,e}^*,$ then 
$|H_{2,2}(f^{-1})-H_{2,2}(f)|\leq\frac{1}{32}.$
\end{cor}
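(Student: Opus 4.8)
The plan is to compute both Hankel determinants in terms of the Schwarz-function coefficients $c_1, c_2, c_3$ and then combine them before estimating, rather than bounding each determinant separately and invoking the triangle inequality on the two bounds (which would only give $\tfrac14+\tfrac14$, far weaker than $\tfrac1{32}$). First I would recall that Theorem \ref{eqthrm3} already gives
\[
H_{2,2}(f^{-1})=\frac{1}{96}\bigl(-c_1^4-18c_1^2c_2-24c_2^2+12c_1c_3\bigr).
\]
For $H_{2,2}(f)=a_2a_4-a_3^2$ I would substitute the expressions \eqref{eq7}, \eqref{eq8}, \eqref{eq9} for $a_2,a_3,a_4$ to obtain a comparable polynomial in $c_1,c_2,c_3$. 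Taking the difference $H_{2,2}(f^{-1})-H_{2,2}(f)$, the leading and mixed terms should partially cancel, leaving a simpler symmetric expression in $c_1,c_2,c_3$ that is the actual object to be bounded.

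Next I would apply the triangle inequality to the simplified difference and use Lemma \ref{eqlem3} to replace $|c_3|$ by $1-|c_1|^2-|c_2|^2/(1+|c_1|)$, producing a two-variable function $\Psi(x,y)$ with $x=|c_1|$, $y=|c_2|$ on the triangular region $\Lambda=\{0\le x\le 1,\ 0\le y\le 1-x^2\}$. Then I would carry out the same optimization strategy used in the two preceding proofs: check that $\partial\Psi/\partial y=0$ has no admissible interior solution so that no interior maximum exists, and reduce to the three boundary pieces $\Psi(0,y)$, $\Psi(x,0)$, and $\Psi(x,1-x^2)$. On each piece the problem becomes a one-variable polynomial maximization, and I expect the bound $\tfrac1{32}$ to be attained on one of the edges (most likely $x=0$ or the curve $y=1-x^2$, matching the pattern of Theorems \ref{eqthm1} and \ref{eqthrm3}).

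The main obstacle I anticipate is the difference computation and the subsequent sign bookkeeping: forming $H_{2,2}(f)$ from $a_2a_4-a_3^2$ introduces a $c_2^2$ contribution and a $c_1^4$ contribution that must be combined correctly with those in $H_{2,2}(f^{-1})$, and a sign slip there would change the coefficients of $\Psi$ and hence the boundary estimates. A secondary subtlety is justifying that the triangle-inequality step is not lossy at the extremal configuration, i.e. that the bound $\tfrac1{32}$ is genuinely sharp; for this I would verify that the function $f_1$ from \eqref{eqt1} (which already saturates both Theorem \ref{eqthm1} and Theorem \ref{eqthrm3}) realizes equality in $|H_{2,2}(f^{-1})-H_{2,2}(f)|=\tfrac1{32}$, so that sharpness follows from the same extremal function. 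Since the statement is a corollary, I would expect the authors to present only the difference formula and the boundary estimates, omitting the routine interior-critical-point discussion as in the theorems above.
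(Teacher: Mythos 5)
Your core strategy is correct and would establish the stated bound, but it is heavier than the paper's argument, and one of your side claims is false. The paper never recomputes both determinants in the Schwarz coefficients: it invokes the identity \eqref{eq101}, namely $H_{2,2}(f^{-1})=a_2a_4-a_3^2-a_2^2(a_3-a_2^2)$, so the difference is exactly $-a_2^2(a_3-a_2^2)$; then by \eqref{eq7} and \eqref{eq8} one has $a_2^2=c_1^2/4$ and $a_3-a_2^2=c_2/2$, whence
\[
|H_{2,2}(f^{-1})-H_{2,2}(f)|=\tfrac{1}{8}|c_1|^2|c_2|\le\tfrac{1}{8}|c_1|^2\bigl(1-|c_1|^2\bigr)\le\tfrac{1}{32}.
\]
Your subtraction route would in fact confirm this: forming both determinants from \eqref{eq7}--\eqref{eq9}, the $c_1^4$, $c_2^2$ and $c_1c_3$ terms cancel \emph{identically} (not merely ``partially''), leaving precisely $-\tfrac{1}{8}c_1^2c_2$. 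Consequently your planned use of Lemma \ref{eqlem3} to eliminate $|c_3|$ is moot, and your two-variable optimization over $\Lambda=\{0\le x\le 1,\ 0\le y\le 1-x^2\}$ collapses to the one-line estimate above: the only ingredient needed is $|c_2|\le 1-|c_1|^2$ together with $\max_{0\le u\le 1}u(1-u)=\tfrac14$.

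The genuine error is your sharpness plan: the function $f_1$ of \eqref{eqt1} does \emph{not} give equality here. For $f_1$ the Schwarz function is $w(z)=z^2$, so $c_1=0$ (equivalently $a_2=0$) and the difference $-\tfrac{1}{8}c_1^2c_2$ vanishes; $f_1$ saturates Theorems \ref{eqthm1} and \ref{eqthrm3} precisely because those bounds are attained at $c_1=0$, $|c_2|=1$, which is the worst possible configuration for the present functional. This does not damage your proof of the inequality itself --- the corollary as stated makes no sharpness claim, and the paper's proof offers none --- but the verification you proposed would fail. If you do want an extremal function, you need $|c_1|^2=\tfrac12$ and $|c_2|=1-|c_1|^2=\tfrac12$, for instance the member of $\mathcal{S}_{S,e}^*$ generated by the Schwarz function $w(z)=z(z+a)/(1+az)$ with $a=1/\sqrt{2}$.
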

 \begin{proof}
Since $f\in \mathcal{S}_{S,e}^*$ and  $H_{2,2}(f^{-1})=A_2A_4-A_3^2=H_{2,2}(f)-a_2^2(a_3-a_2^2),$  we have
\[|H_{2,2}(f^{-1})-H_{2,2}(f)|=|a_2^2(a_3-a_2^2)|\leq|a_2^2| |(a_3-a_2^2)|.\]
Using the equations \eqref{eq7} and \eqref{eq8} we have
\[|H_{2,2}(f^{-1})-H_{2,2}(f)|\leq\frac{1}{8}|c_1|^2(1-|c_1|^2)\leq\frac{1}{32},\quad 0\leq|c_1|\leq1.\qedhere\]
 \end{proof}

\begin{thm}
If the function $f(z)=z+\sum_{n=2}^{\infty}a_nz^n\in \mathcal{S}_{S,e}^*,$ then 
$|H_{2,3}(f^{-1})|\leq\frac{1}{4}.$
The inequality is sharp.
\end{thm}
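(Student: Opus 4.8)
The plan is to follow the same template that succeeded for the preceding two theorems, since $H_{2,3}(f^{-1})$ has an explicit polynomial expression \eqref{eq102} in the coefficients $a_3, a_4, a_5$. First I would substitute the values of $a_3$, $a_4$, $a_5$ from \eqref{eq8}, \eqref{eq9}, \eqref{eq10} into $H_{2,3}(f^{-1}) = a_3a_5 - a_4^2 - 3a_3^3$ and expand to obtain a polynomial in the Schwarz coefficients $c_1, c_2, c_3, c_4$. Unlike the previous two proofs, this determinant involves $a_5$ and hence $c_4$, so the expression will genuinely depend on four Schwarz coefficients rather than three, and I expect this to be the main source of difficulty.

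The natural next move is to apply the triangle inequality together with Lemma \ref{eqlem3}, bounding $|c_3|$ by $1-|c_1|^2-|c_2|^2/(1+|c_1|)$ and $|c_4|$ by $1-|c_1|^2-|c_2|^2$, to reduce to a function of the two real variables $x=|c_1|$ and $y=|c_2|$ over the triangular region $\Lambda=\{(x,y):0\le x\le1,\ 0\le y\le1-x^2\}$. One must be careful here: the reduction to $|c_i|$ by the triangle inequality is only valid if every monomial in $H_{2,3}(f^{-1})$ can be made to contribute its maximal modulus consistently, so before blindly taking absolute values I would check that the dominant terms have signs that do not create cancellation obstructions, or else group terms so that the Schwarz bounds apply cleanly. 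Once a majorant $\Psi(x,y)$ is in hand, I would look for interior critical points by solving $\partial\Psi/\partial x=0$ and $\partial\Psi/\partial y=0$, expecting (as in Theorems \ref{eqthm1} and \ref{eqthrm3}) that the partial derivative in $y$ has no admissible zero in the interior, which forces the maximum onto the boundary of $\Lambda$.

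I would then evaluate $\Psi$ on the three boundary pieces $\{x=0\}$, $\{y=0\}$, and $\{y=1-x^2\}$, reducing each to a single-variable polynomial inequality on $[0,1]$, and verify that the maximum value $1/4$ is attained, most likely along $x=0$ or the parabolic edge $y=1-x^2$. Finally, for sharpness I would exhibit the extremal function; given that $f_1$ from \eqref{eqt1} served as the extremizer for the previous two theorems and corresponds to $c_1=0$ with $c_2=1$ (i.e., the boundary point $x=0,\ y=1$), I anticipate $f_1$ again realizes the bound, which I would confirm by computing $H_{2,3}(f^{-1})$ directly for $f_1$.

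The hard part will be controlling the $c_4$ and $c_3$ contributions after the triangle inequality: because $H_{2,3}$ reaches one coefficient further than $H_{2,2}$, the majorant $\Psi(x,y)$ is more delicate, and there is a real risk that the crude termwise triangle inequality overestimates and yields a bound larger than $1/4$. If that happens, the fallback is to retain $c_3$ and $c_4$ (or their moduli) as free parameters bounded via Lemma \ref{eqlem3} and optimize more carefully — possibly keeping the $\eta,\gamma$ structure only implicitly through the Schwarz estimates rather than expanding fully — so that the cancellations present in the exact expression are not destroyed prematurely.
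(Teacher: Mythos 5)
Your proposal follows essentially the same route as the paper's proof: substitute \eqref{eq8}--\eqref{eq10} into \eqref{eq102}, apply the termwise triangle inequality together with Lemma \ref{eqlem3} to majorize by a function $\Phi(x,y)$ of $x=|c_1|$, $y=|c_2|$ on $\Lambda$, rule out interior critical points via the partial derivatives, maximize on the three boundary pieces, and realize sharpness with $f_1$ from \eqref{eqt1}. Your anticipated risk that the crude termwise bound might exceed $1/4$ does not materialize: the paper's majorant attains exactly $1/4$ at $(x,y)=(0,1)$, i.e., at the extremal function $f_1$, so no finer grouping of terms is needed.
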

\begin{proof}
Since $f \in \mathcal{S}_{S,e}^*,$ in view of \eqref{eq8}, \eqref{eq9}, \eqref{eq10} and \eqref{eq102},  we have
\begin{align*}
|H_{2,3}(f^{-1})|\leq\frac{109}{2304}{|c_1|^6}+\frac{53}{192}{|c_1|^4|c_2|}+\frac{33}{64}{|c_1|^2|c_2|^2}&+\frac{1}{4}{|c_2|^3}+\frac{1}{96}{|c_1|^3|c_3|}+\frac{1}{16}{|c_1| |c_2| |c_3|}\\&+\frac{1}{16}{|c_3|^2}+\frac{1}{16}{|c_1|^2|c_4|}+\frac{1}{8}{|c_2| |c_4|}.
\end{align*}
By making use of Lemma \eqref{eqlem3} in the above expression, we get
\begin{align}\label{eq16}
|H_{2,3}(f^{-1})|\leq\frac{109}{2304}{|c_1|^6}&+\frac{53}{192}{|c_1|^4|c_2|}+\frac{33}{64}{|c_1|^2|c_2|^2}+\frac{1}{4}{|c_2|^3}+\frac{1}{96}{|c_1|^3\bigg(1-|c_1|^2-\frac{|c_2|^2}{1+|c_1|}\bigg)}\notag\\&+\frac{1}{16}{|c_1| |c_2|\bigg(1-|c_1|^2-\frac{|c_2|^2}{1+|c_1|}\bigg)}+
\frac{1}{16}{\bigg(1-|c_1|^2-\frac{|c_2|^2}{1+|c_1|}\bigg)^2}\notag\\&+\frac{1}{16}{|c_1|^2(1-|c_1|^2-|c_2|^2)}+\frac{1}{8}{|c_2|(1-|c_1|^2-|c_2|^2)}.
\end{align}
Next, setting $x=|c_1|$ and $y=|c_2|,$ the above inequality \eqref{eq16} becomes $|H_{2,3}(f^{-1})|\leq\Phi(x,y)$ 
where 
\begin{align*}
\Phi(x,y)=\frac{109}{2304}{x^6}&+\frac{53}{192}{x^4y}+\frac{33}{64}{x^2y^2}+\frac{1}{4}{y^3}+\frac{1}{96}{x^3\bigg(1-x^2-\frac{y^2}{1+x}\bigg)}\\&+\frac{1}{16}{x y\bigg(1-x^2-\frac{y^2}{1+x}\bigg)}+
\frac{1}{16}{\bigg(1-x^2-\frac{y^2}{1+x}\bigg)^2}\\&+\frac{1}{16}{x^2(1-x^2-y^2)}+\frac{1}{8}{y(1-x^2-y^2)}.
\end{align*}
Further, we find the maximum value of the function $\Phi$ in the region $\Lambda=\{(x,y):0\leq x \leq 1, 0\leq y \leq 1-x^2\}.$ We  consider following two cases.
\begin{itemize}
\item [$(A_1)$] In this case, we compute the maximum value of $\Phi$ on the boundary of $\Lambda$. A basic calculation on $\Phi$ gives 
\begin{align*}
\Phi(0,y)&=\frac{1}{16}{(y^4+2y^3-2y^2+2y+1)}\leq\frac{1}{4},\quad 0\leq y\leq1.\\
\Phi(x,0)&=\frac{1}{2304}{(109x^6-24x^5+24x^3-144x^2+144)}\leq\frac{1}{16}, \quad 0\leq x\leq1.\\
\Phi(x,1-x^2)&=\frac{1}{2304}(493x^6-996x^4+36x^2+576)\leq\frac{1}{4},\quad 0\leq x\leq1.
\end{align*}
\item [$(A_2)$] We consider the interior point of $\Lambda.$ Since critical point is a solution of system of equations ${\partial \Phi}/{\partial x}=0$ and ${\partial \Phi}/{\partial y}=0,$
thus
\begin{align}\label{eq16}
\frac{\partial \Phi}{\partial x}=\frac{109}{384}{x^5}&+\frac{x^2y}{48}{(53x-9)}+\frac{29}{32}{xy^2}+\frac{1}{96}{(-5x^4-24x^3+3x^2+12x)}\notag\\&+\frac{1}{16}{y(1-4x)}-\frac{1}{96(1+x)^2}{(3x^2y^2+2x^3y^2+6y^3)}\notag\\&+\frac{1}{8}\bigg(1-x^2-\frac{y^2}{1+x}\bigg)\bigg(-2x+\frac{y^2}{(1+x)^2}\bigg)=0
\end{align}
and
\begin{align}\label{eq17}
\frac{\partial \Phi}{\partial y}=\frac{53}{192}{x^4}&+\frac{29}{32}{x^2y}+\frac{3}{4}{y^2}-\frac{x^3y}{48(1+x)}+\frac{1}{16}\bigg(x-x^3-\frac{3xy^2}{1+x}\bigg)\notag\\&+\frac{1}{4(1+x)^2}(y^3+x^3y+x^2y-xy-y)\notag\\&+\frac{1}{8}(1-x^2-3y^2)=0.
\end{align}
Therefore, the system of equation \eqref{eq16} and \eqref{eq17} has no common  solution in  the interior of $\Lambda$ and we note that the function $\Phi$ has  no maximum value.
\end{itemize} 
From the cases ($A_1$) and ($A_2$),  we conclude that the maximum value of $\Phi$ on region $\Lambda$ is $1/4.$  Sharpness follows for the function $f_1$ defined in \eqref{eqt1}.\qedhere
\end{proof}
\begin{cor}
If the function $f(z)=z+\sum_{n=2}^{\infty}a_nz^n\in \mathcal{S}_{S,e}^*,$ then we have
\[|H_{2,3}(f^{-1})-H_{2,3}(f)|\leq\frac{3}{8}.\]
The inequality is sharp  for the function $f_1$ defined by \eqref{eqt1}.
\end{cor}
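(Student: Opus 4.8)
The plan is to first collapse the difference of the two Hankel determinants to a single coefficient. Comparing the definition $H_{2,3}(f)=a_3a_5-a_4^2$ recorded in \eqref{eq104} with the inverse version $H_{2,3}(f^{-1})=a_3a_5-a_4^2-3a_3^3$ recorded in \eqref{eq102}, I would observe that the common terms $a_3a_5-a_4^2$ cancel, so that
\[
H_{2,3}(f^{-1})-H_{2,3}(f)=-3a_3^3 .
\]
Hence $|H_{2,3}(f^{-1})-H_{2,3}(f)|=3|a_3|^3$, and the whole problem reduces to a sharp bound on $|a_3|$ for $f\in\mathcal{S}_{S,e}^*$. This reduction is the only genuine idea in the argument, after which everything is routine.

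Next I would invoke the coefficient formula \eqref{eq8}, namely $a_3=\tfrac14(c_1^2+2c_2)$, where $c_1,c_2$ are the Schwarz coefficients arising from \eqref{eq5}. By the triangle inequality, $|a_3|\le\tfrac14\bigl(|c_1|^2+2|c_2|\bigr)$. Applying Lemma \ref{eqlem3} in the form $|c_2|\le 1-|c_1|^2$ and writing $x=|c_1|\in[0,1]$, this estimate becomes
\[
|a_3|\le\frac14\bigl(x^2+2(1-x^2)\bigr)=\frac14(2-x^2),
\]
which is decreasing in $x$ and therefore maximized at $x=0$. This gives the sharp bound $|a_3|\le\tfrac12$, and consequently
\[
|H_{2,3}(f^{-1})-H_{2,3}(f)|=3|a_3|^3\le 3\cdot\frac18=\frac38 ,
\]
as claimed.

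For sharpness, the extremal configuration is precisely $|c_1|=0$, $|c_2|=1$, which forces $a_3=\tfrac12$. This corresponds to the function $f_1$ defined in \eqref{eqt1}, whose third coefficient is $a_3=\tfrac12$; for it $3|a_3|^3=\tfrac38$, so the inequality is attained and the estimate is best possible.

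As for the main obstacle: there is essentially none of substance here. The one point requiring a small check is that the maximum of $|c_1|^2+2|c_2|$ over the admissible region genuinely occurs at $|c_1|=0$; since the $|c_2|$ term carries the larger coefficient while $|c_2|$ is constrained by $1-|c_1|^2$, pushing $|c_1|$ to $0$ is optimal, and the monotonicity of $\tfrac14(2-x^2)$ confirms this rigorously. Thus the corollary follows immediately from the identity $H_{2,3}(f^{-1})-H_{2,3}(f)=-3a_3^3$ together with the sharp coefficient bound $|a_3|\le\tfrac12$.
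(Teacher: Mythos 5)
Your proposal is correct and follows essentially the same route as the paper: both collapse the difference to $3|a_3|^3$ via \eqref{eq102} and \eqref{eq104}, then estimate $a_3$ through \eqref{eq8} and Lemma \ref{eqlem3}, with sharpness from $f_1$. The only cosmetic difference is that you bound $|a_3|\le\tfrac14\left(2-|c_1|^2\right)\le\tfrac12$ and then cube, whereas the paper expands $3|a_3|^3=\tfrac{3}{64}\left(|c_1|^2+2|c_2|\right)^3$ into the two-variable polynomial $u(|c_1|,|c_2|)$ and maximizes it over $\Lambda$; since that polynomial is exactly a perfect cube, your one-variable monotonicity argument is just the transparent way of doing the same maximization.
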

\begin{proof}
Since $f \in \mathcal{S}_{S,e}^*$ and  $H_{2,3}(f^{-1})=a_3a_5-a_4^2-3a_3^3=H_{2,3}(f)-a_3^3.$
Therefore, 
$|H_{2,3}(f^{-1})-H_{2,3}(f)|=3|a_3^3|.$
Using \eqref{eq8}, we get
\begin{align*}
|H_{2,3}(f^{-1})-H_{2,3}(f)|&=\frac{3}{64}\bigg|c_1^6+8c_2^3+6c_1^4c_2+12c_1^2c_2^2\bigg|\notag\\&\leq\frac{3}{64}\bigg(|c_1|^6+8|c_2|^3+6|c_1|^4|c_2|+12|c_1|^2|c_2|^2\bigg)\notag\\&=u (|c_1|,|c_2|).
\end{align*} 
On similar lines as done in the previous theorem, it is easy to obtain the maximum value of the function $u(|c_1|, |c_2|)$ on the region  $\Lambda=\{(|c_1|,|c_2|):0\leq |c_1| \leq 1, 0\leq |c_2| \leq 1-|c_1|^2\}.$ 
\qedhere
\end{proof}

\begin{thm}\label{eqthrm8}
If the function $f(z)=z+\sum_{n=2}^{\infty}a_nz^n\in \mathcal{S}_{S,	L}^*,$ then
 \[|H_{2,1}(F_{f^{-1}}/2)|\leq\frac{1}{64}.\]
The inequality is sharp.
\end{thm}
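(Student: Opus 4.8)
The plan is to mirror the argument of Theorem~\ref{eqthm1}, replacing the exponential $e^{w(z)}$ by $\sqrt{1+w(z)}$. First I would write the defining subordination as
\[\frac{2zf'(z)}{f(z)-f(-z)}=\sqrt{1+w(z)},\]
where $w(z)=c_1z+c_2z^2+c_3z^3+\cdots$ is a Schwarz function, expand $\sqrt{1+t}=1+\tfrac12 t-\tfrac18 t^2+\tfrac1{16}t^3-\cdots$ with $t=w(z)$, and match the result against the expansion $1+2a_2z+2a_3z^2+(-2a_2a_3+4a_4)z^3+\cdots$ of the left-hand side. Comparing coefficients of $z,z^2,z^3$ yields the analogues of \eqref{eq7}--\eqref{eq9}, namely
\[a_2=\frac{c_1}{4},\qquad a_3=\frac14\Big(c_2-\frac{c_1^2}{4}\Big),\qquad a_4=\frac18 c_3-\frac1{32}c_1c_2+\frac1{128}c_1^3.\]

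Next I would substitute these expressions into the formula \eqref{eqtwenty} for $H_{2,1}(F_{f^{-1}}/2)$. After simplification this produces a homogeneous degree-four polynomial in $c_1,c_2,c_3$; a routine computation gives
\[H_{2,1}(F_{f^{-1}}/2)=\frac{1}{12288}\big(19c_1^4+24c_1^2c_2-192c_2^2+96c_1c_3\big).\]
Applying the triangle inequality together with the Schwarz estimate $|c_3|\le 1-|c_1|^2-|c_2|^2/(1+|c_1|)$ from Lemma~\ref{eqlem3}, and writing $x=|c_1|$, $y=|c_2|$, I obtain $|H_{2,1}(F_{f^{-1}}/2)|\le\chi(x,y)$ with
\[\chi(x,y)=\frac{1}{12288}\Big(19x^4+24x^2y+192y^2+96x\Big(1-x^2-\frac{y^2}{1+x}\Big)\Big),\]
to be maximized over $\Lambda=\{(x,y):0\le x\le1,\ 0\le y\le1-x^2\}$.

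The maximization then proceeds exactly as before. I expect $\partial\chi/\partial y$ to be strictly positive throughout the interior of $\Lambda$, because the term $384y$ dominates $-192xy/(1+x)$ since $x/(1+x)<2$; hence there is no interior maximum and only the three boundary arcs need be tested. On the edge $x=0$ one gets $\chi(0,y)=y^2/64\le1/64$; on $y=0$ the value stays strictly below $1/64$; and using the identity $1-x^2-(1-x^2)^2/(1+x)=x(1-x^2)$ the upper edge collapses to $\chi(x,1-x^2)=\tfrac1{12288}(91x^4-264x^2+192)$, which is decreasing on $[0,1]$ and therefore $\le1/64$, with equality only at $x=0$. This establishes the bound $1/64$. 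The main bookkeeping obstacle is getting the coefficients of the degree-four polynomial right, since a slip there propagates into all the boundary estimates; the optimization itself is elementary. Finally, sharpness follows from the function $g_1$ of \eqref{eqt7}, which corresponds to the choice $w(z)=z^2$ (so $c_1=c_3=0$ and $c_2=1$) and yields $H_{2,1}(F_{f^{-1}}/2)=-1/64$.
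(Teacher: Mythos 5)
Your proposal is correct, and it reaches the bound by a genuinely different route than the paper's own proof of Theorem~\ref{eqthrm8}. The paper passes to the Carath\'eodory class by setting $p=(1+w)/(1-w)$, expresses $a_2,a_3,a_4$ in terms of $p_1,p_2,p_3$ (its \eqref{eq52}--\eqref{eq54}), arrives at $H_{2,1}(F_{f^{-1}}/2)=\tfrac{1}{393216}(-202p_1^4+864p_1^2p_2-1536p_2^2+768p_1p_3)$, and then invokes the Libera--Zlotkiewicz representation (Lemma~\ref{lem1}) to reduce the problem to maximizing a three-variable function $N(p,x,y)$ over the cuboid $[0,2]\times[0,1]\times[0,1]$, with a case analysis over the interior, six faces and twelve edges. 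You instead stay with the Schwarz coefficients and Carlson's inequalities (Lemma~\ref{eqlem3}), exactly the technique the paper uses for Theorem~\ref{eqthm1}, which collapses the optimization to two variables over $\Lambda$ with only three boundary arcs. Your computations check out: your coefficient formulas agree with the paper's \eqref{eq78}--\eqref{eq80}, and your polynomial $\tfrac{1}{12288}\bigl(19c_1^4+24c_1^2c_2-192c_2^2+96c_1c_3\bigr)$ is consistent with the paper's $p_i$-expression under $c_1=p_1/2$, $c_2=p_2/2-p_1^2/4$, $c_3=p_3/2-p_1p_2/2+p_1^3/8$. The optimization is also sound: $\partial\chi/\partial y=\tfrac{1}{12288}\bigl(24x^2+192y\bigl(2-\tfrac{x}{1+x}\bigr)\bigr)>0$ in the interior, $\chi(0,y)=y^2/64$, $\chi(x,0)$ stays well below $1/64$, and the identity $1-x^2-(1-x^2)^2/(1+x)=x(1-x^2)$ gives $\chi(x,1-x^2)=\tfrac{1}{12288}(91x^4-264x^2+192)$, which is decreasing since its derivative is $x(364x^2-528)\le 0$ on $[0,1]$, hence at most $192/12288=1/64$; the extremal $g_1$ (with $w(z)=z^2$, so $H_{2,1}(F_{f^{-1}}/2)=-1/64$) matches the paper's sharpness function. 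What the paper's route buys is an exact parametrization of $p_2,p_3$ (useful when termwise triangle-inequality bounds are too lossy); what yours buys is economy and uniformity --- a two-variable boundary check instead of a cuboid analysis, in the same style as Theorem~\ref{eqthm1}. Since the extremal here has $c_1=0$, the cruder Carlson bounds lose nothing and both proofs attain the sharp constant $1/64$.
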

\begin{proof}
Let  the function $f\in\mathcal{S}_{S,L}^*$. Then  
\begin{equation}\label{eqrs}
\frac{2zf'(z)}{f(z)-f(-z)}=\sqrt{1+w(z)},\quad\mbox{for all}\,\, z\in\mathbb{D}
\end{equation}
where
$w(z)$ is the Schwarz function in $\mathbb{D}.$
Since 
$p(z)={(1+w(z))}/{(1-w(z))},$ then \eqref{eqrs} becomes
\begin{equation}\notag
\frac{2zf'(z)}{f(z)-f(-z)}=
1+\frac{1}{4}{p_1} z+ \frac{1}{32} (8 {p_2}-5 {p_1}^2)z^2+\frac{1}{128} (13  {p_1}^3-40  {p_1} {p_2}+32 {p_3})z^3+\cdots.
\end{equation}
On equating the coefficients of like powers of $z$ on both the sides,  we have
\begin{align}
a_2=&\frac{1}{8}p_1 \label{eq52},\\
a_3=&\frac{1}{64}(-5p_1^2+8p_2)\label{eq53},\\
a_4=&\frac{1}{1024}(21p_1^3-72p_1p_2+64p_3)\label{eq54}\\
a_5=&\frac{1}{8192}(-116p_1^4+544p_1^2p_2-256p_2^2-640p_1p_3+512p_4).\label{eq55}
\end{align}
 On substituting the values of $a_2, a_3$ and $a_4$ from
\eqref{eq52}, \eqref{eq53} and  \eqref{eq54} respectively in \eqref{eqtwenty}, we get
$$H_{2,1}(F_{f^{-1}}/2)=\frac{1}{393216}(-202p_1^4+864p_1^2p_2-1536p_2^2+768p_1p_3).$$
We assume  $p_1=p$ and by making use of Lemma \eqref{lem1},  we have
\[H_{2,1}(F_{f^{-1}}/2)=\frac{1}{393216}\bigg(38p^4+48p^2(4-p^2)(\xi-4\xi^2)+384p(4-p^2)(1-|\xi|^2)\eta-384(4-p^2)^2\xi^2\bigg)\]
such that
\begin{align*}
|H_{2,1}(F_{f^{-1}}/2)|\leq&\frac{1}{393216}\bigg(38p^4+48p^2(4-p^2)(|\xi|+4|\xi|^2)\\
&\quad\quad \qquad\,\, +384p(4-p^2)(1-|\xi|^2)|\eta|
+384(4-p^2)^2|\xi|^2\bigg)\notag\\
=&N(p,|\xi|,|\eta|).
\end{align*}
On  setting $x=|\xi|, y=|\eta| $, we have
\[N(p,x,y)=\frac{1}{393216}\bigg(38p^4+48p^2(4-p^2)(x+4x^2)+384py(4-p^2)(1-x^2)+384(4-p^2)^2x^2\bigg)\]
 for $x, y\in[0,1]$ and $p\in[0,2].$
Next, we find the maximum value of  function $N(p,x,y)$ over the cuboid   $\Omega=[0,2]\times[0,1]\times[0,1].$ We consider the following three cases on the cuboid $\Omega.$
\begin{itemize}
\item [(A)]  Let $(p,x,y)\in\Omega.$ Since ${\partial N}/{\partial y}=(p(4-p^2)(1-x^2))/1024\neq0,$ the function $N$ has no critical point in the interior of $\Omega.$ 
\item [(B)] We   consider  the interior of six faces of the cuboid $\Omega.$
\begin{enumerate}
\item [(i)] At $p=0,$ we have
$$N(0,x,y)=\frac{6144x^2}{393216}\leq\frac{1}{64},\quad x,y\in(0,1).$$
\item [(ii)] At $p=2,$ we have
$$N(2,x,y)=\frac{19}{12288},\quad x,y\in(0,1).$$
\item [(iii)] At $x=0,$ we have 
$N(p,0,y)=(384py(4-p^2))/393216.$ Since ${\partial N(p,0,y)}/{\partial y}=(p(4-p^2)(1-x^2))/1024\neq0.$ The function $N(p,0,y)$ has no critical point.
\item [(iv)] At $x=1,$ we have 
$$N(p,1,y)=\frac{1}{393216}(528p^4-2112p^2+6144)\leq\frac{1}{64},\quad c\in(0,2).$$
\item [(v)] At $y=0,$ we have 
$N(p,x,0)=\bigg(38p^4+48p^2(4-p^2)(x+4x^2)+384(4-p^2)^2x^2\bigg)/393216.$
Since ${\partial N(p,x,0)}/{\partial x}=\bigg(48p^2(4-p^2)(1+8x)+768(4-p^2)^2x\bigg)/393216\neq0,$ the function $N(p,x,0)$ has no maximum value for $(p,x)\in(0,2)\times(0,1).$
\item [(vi)] At $y=1,$ we have $N(p,x,1)=\bigg(38p^4+48p^2(4-p^2)(x+4x^2)+384p(4-p^2)(1-x^2)+384(4-p^2)^2x^2\bigg)/{393216}.$ Now,
$$\frac{\partial N(p,x,1)}{\partial x}=\frac{1}{8192}\bigg((4-p^2)(p^2(1-8x)-16x(p-4))\bigg)=0,$$
has no solution for $(p,x)\in(0,2)\times(0,1).$ Therefore, the system of equation ${\partial N(p,x,1)}/{\partial p}=0$ and ${\partial N(p,x,1)}/{\partial x}=0$ has no  solution in $(0,2)\times(0,1).$ Hence, the  function $N(p,x,1)$ has no critical point.
\end{enumerate}
\item [(C)]Next, we consider the following twelve edges of the cuboid $\Omega$
\begin{enumerate}
\item [(i)]$N(p,0,0)=N(2,1,y)=N(2,0,y)=N(2,x,0)=N(2,x,1)={19}/{12288},$
\item [(ii)] $N(p,0,1)=(38p^4-384p^3+1536p)/393216\leq0.0032.$
\item [(iii)]
$N(p,1,1)=N(p,1,0)\leq{1}/{64},$
\item [(iv)]
$N(0,0,y)=0,$
\item [(v)]
$N(0,1,y)=1/64,$
\item [(vi)]
$N(0,x,0)=N(0,x,1)=x^2/64\leq1/64.$
\end{enumerate}
\end{itemize}
In view of cases (A), (B) and (C), we obtain the maximum value 1/64 of the function $N(p,x,y)$ over $\Omega$ which  is sharp for the function $g_1$ defined by \eqref{eqt7}.\qedhere
\end{proof}
\begin{rem}
From ~\cite{kumar23} and Theorem \ref{eqthrm8}, it is noted that the bounds on  $|\gamma_1\gamma_3-\gamma_2^2|$ and $|\Gamma_1\Gamma_3-\Gamma_2^2|$ are invariant for the function  $f\in\mathcal{S}_{S,L}^*.$
\end{rem}
Next result shows that the bound on $|H_{2,2}(f)|$ and $|H_{2,2}(f^{-1})|$ are invariant for the functions $f\in\mathcal{S}_{S,L}^*.$
\begin{thm}
If the function  $f(z)=z+\sum_{n=2}^{\infty}a_nz^n\in \mathcal{S}_{S,L}^*,$ then 
\[|H_{2,2}(f)|, |H_{2,2}(f^{-1})|\leq\frac{1}{16}.\]
The inequality is sharp.
\end{thm}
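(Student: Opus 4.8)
The plan is to handle both determinants in parallel, reducing each to a bounded-variable maximization over a cuboid, exactly mirroring the argument already carried out in Theorem~\ref{eqthrm8}. First I would recall that for $f\in\mathcal{S}_{S,L}^*$ the coefficients $a_2,a_3,a_4$ are expressed through the associated Carath\'eodory coefficients $p_1,p_2,p_3$ by \eqref{eq52}, \eqref{eq53} and \eqref{eq54}. Substituting these into $H_{2,2}(f)=a_2a_4-a_3^2$ and, separately, into \eqref{eq101} for $H_{2,2}(f^{-1})=a_2a_4-a_3^2-a_2^2(a_3-a_2^2)$ produces two homogeneous degree-four polynomials in $p_1,p_2,p_3$, which is routine.

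Next I would invoke Lemma~\ref{lem1} to replace $p_2,p_3$ by $p:=p_1\in[0,2]$ together with parameters $\xi,\eta\in\overline{\mathbb{D}}$, apply the triangle inequality, and then set $x=|\xi|$, $y=|\eta|$ to obtain real bounding functions $G(p,x,y)$ for $|H_{2,2}(f)|$ and $\widetilde G(p,x,y)$ for $|H_{2,2}(f^{-1})|$ on the cuboid $\Omega=[0,2]\times[0,1]\times[0,1]$. Because $p_3$ enters $a_4$, hence $a_2a_4$, only linearly in $\eta$, I expect each bounding function to be linear in $y$; consequently $\partial G/\partial y$ (and $\partial\widetilde G/\partial y$) will not vanish in the interior of $\Omega$, ruling out any interior critical point and forcing the maximum onto the boundary.

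Then I would run the same face-and-edge analysis as in Theorem~\ref{eqthrm8}: examine the interiors of the six faces $p=0$, $p=2$, $x=0$, $x=1$, $y=0$, $y=1$, followed by the twelve edges. On the face $p=0$ I expect the bounding function to collapse to a multiple of $x^2$, attaining the value $1/16$ precisely at $x=1$, while single-variable estimates on the remaining faces and edges should stay strictly below $1/16$. The main obstacle is the bookkeeping in this case analysis, specifically verifying that the systems $\partial G/\partial x=\partial G/\partial p=0$ (and their tilde counterparts) admit no solution in the open faces, so that no larger value is concealed in the interior of a two-dimensional boundary piece.

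Finally, sharpness of both bounds follows from the function $g_1$ of \eqref{eqt7}, for which $a_2=0$ and $a_3=\tfrac14$, giving $H_{2,2}(f)=H_{2,2}(f^{-1})=-a_3^2=-\tfrac{1}{16}$. The vanishing of $a_2$ at the extremal makes the correction term $a_2^2(a_3-a_2^2)$ disappear, which is exactly what makes the two determinants coincide there and share the common bound $1/16$, thereby also exhibiting the claimed invariance of $|H_{2,2}(f)|$ and $|H_{2,2}(f^{-1})|$.
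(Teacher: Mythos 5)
Your proposal is correct, but it takes a genuinely different route from the paper. The paper proves this theorem by working directly with the Schwarz function: writing $2zf'(z)/(f(z)-f(-z))=\sqrt{1+w(z)}$, extracting $a_2=c_1/4$, $a_3=(-c_1^2+4c_2)/16$, $a_4=(c_1^3-4c_1c_2+16c_3)/128$ (equations \eqref{eq78}--\eqref{eq80}), applying the triangle inequality together with Carlson's bounds of Lemma \ref{eqlem3} on $|c_2|$ and $|c_3|$, and then maximizing two-variable functions $\beta(x,y)$ and $\alpha(x,y)$ over the region $\Lambda=\{0\leq x\leq 1,\ 0\leq y\leq 1-x^2\}$, with sharpness from $g_1$. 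You instead use the Carath\'eodory coefficients \eqref{eq52}--\eqref{eq54} and the Libera--Zlotkiewicz representation (Lemma \ref{lem1}), which leads to a three-variable maximization over the cuboid $[0,2]\times[0,1]\times[0,1]$, mirroring the paper's proof of Theorem \ref{eqthrm8}. Your key structural observation is sound: carrying out the substitution gives
\begin{equation*}
8192\,H_{2,2}(f)=-p^4+12p^2t\xi-16p^2t\xi^2-32t^2\xi^2+32pt(1-|\xi|^2)\eta,\qquad t=4-p^2,
\end{equation*}
and $8192\,H_{2,2}(f^{-1})=3p^4+4p^2t\xi-16p^2t\xi^2-32t^2\xi^2+32pt(1-|\xi|^2)\eta$, both linear in $\eta$, so after the triangle inequality the bounding functions increase in $y$ and the maximum sits on the boundary; on the face $y=1$ one checks $\partial/\partial x>0$ (using $p^2+2t-2p=(p+4)(2-p)>0$), and on the edge $x=1$ the resulting polynomials $5p^4-144p^2+512$ and $15p^4-176p^2+512$ are decreasing on $(0,2)$, giving the maximum $512/8192=1/16$ at $p=0$, $x=1$ in both cases. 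So your plan verifiably delivers the sharp constant, and your sharpness argument via $g_1$ (where $a_2=0$ makes both determinants equal $-a_3^2=-1/16$) matches the paper's. The trade-off: the paper's Schwarz-coefficient route needs only two variables and a smaller region, so the boundary analysis is shorter; your route has heavier bookkeeping (six faces, twelve edges) but is uniform with Theorem \ref{eqthrm8} and requires no separate coefficient derivation beyond what the paper already records. One small imprecision: not all remaining faces and edges stay \emph{strictly} below $1/16$ --- the edge $\{p=0,\ x=1\}$ (and hence the closures of the faces $p=0$ and $x=1$) also attains exactly $1/16$ --- but this does not affect the conclusion.
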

\begin{proof}
Let the function $f\in\mathcal{S}_{S,L}^*.$ Then,  we have
\begin{equation}\label{eq76}
\frac{2zf'(z)}{f(z)-f(-z)}=\sqrt{1+(w(z))},\quad z\in\mathbb{D},
\end{equation}
where $w(z)=c_1z+c_2z^2+c_3z^3+c_4z^4+...$ is a Schwarz function defined on $\mathbb{D}.$
Using the Taylors expansion in equation \eqref{eq76}, we have
\begin{align}\label{eq77}
\frac{2zf'(z)}{f(z)-f(-z)}=1+&\frac{c_1}{2}z+(-\frac{c_1^2}{8}+\frac{c_2}{2})z^2+\frac{1}{16}(c_1^3-4c_1c_2+8c_3)z^3\notag\\& +\frac{1}{128}(-5c_1^4+24c_1^2c_2-16c_2^2-32c_1c_3+64c_4)z^4+\cdots. 
\end{align}
On comparing the coefficients of identical  powers of $z$  in the expression \eqref{eq77},  we obtain 
\begin{align}
a_{2 }= &\frac{c_{1}}{4}, \label{eq78}\\
a_{3 }=&\frac{1}{16}(-c_1^2+4c_2), \label{eq79}\\
a_{4 }= &\frac{1}{128}(c_1^3-4c_1 c_2+16c_3), \label{eq80}\\
a_{5 }=& \frac{1}{128}(-c_1^4+4c_1^2c_2 - 8c_1c_3+16c_4). \label{eq81}
\end{align}
By using  \eqref{eq78}, \eqref{eq79} and \eqref{eq80} in \eqref{eq104}, we have
\begin{align*}
|H_{2,2}(f)|&=\frac{1}{512}\bigg|(-c_1^4+12c_1^2c_2-32c_2^2+16c_1c_3)\bigg|\notag\\&\leq\frac{1}{512}(|c_1|^4+12|c_1|^2|c_2|+32|c_2|^2+16|c_1| |c_3|).
\end{align*}
On using Lemma \eqref{eqlem3} in the above expression, we get
\begin{align*}
|H_{2,2}(f)|&\leq\frac{1}{512}\bigg(|c_1|^4+12|c_1|^2|c_2|+32|c_2|^2+16|c_1|\left(1-|c_1|^2-\frac{|c_2|^2}{1+|c_1|}\right)\bigg)\notag\\&=\beta(|c_1|,|c_2|). 
\end{align*}
We set $x=|c_1|, y=|c_2|.$ Then, we have 
\[\beta(|c_1|,|c_2|)=\frac{1}{512}\bigg(x^4+12x^2y+32y^2+16x\left(1-x^2-\frac{y^2}{1+x}\right)\bigg).\] 
Next, we find  the maximum value of function  $\beta(x,y)$ on  the region 
$\Lambda=\{(x,y):0\leq x \leq 1,\,\, 0\leq y \leq 1-x^2\}.$ Since ${\partial \beta}/{\partial y}=12x^2+32y\left(2-\frac{x}{1+x}\right)\neq0$ in the interior of $\Lambda.$ The function $\beta$ has no critical point in the interior of $\Lambda.$ Next, we  consider the boundary of $\Lambda$. A simple calculation gives
\begin{enumerate}
\item [(i)]$\beta(0,y)={32y^2}/{512}\leq{1}/{16}$ for all $0\leq y\leq1,$
\item[(ii)]$\beta(x,0)=(x^4-16x^3+16x)/512\leq0.0123$ for all $0\leq x\leq1,$
\item[(iii)]$\beta(x,1-x^2)=(5x^4-36x^2+32)/512\leq{1}/{16}$ for all $0\leq x\leq1.$
\end{enumerate}
Therefore, the  estimate on $|H_{2,2}(f)|$  is 1/16, which is sharp  for the function $g_1$ defined in \eqref{eqt7}.
Next, on substituting  values of $a_2$, $a_3$ and $a_4$ from  \eqref{eq78}, \eqref{eq79} and \eqref{eq80} respectively in  \eqref{eq101},  we have 
\[512H_{2,2}(f^{-1})=3c_1^4+4c_1^2c_2+16c_1c_3-32c_2^2.\] 
In view of  Lemma \eqref{eqlem3},  above expression becomes
\begin{align*}
|H_{2,2}(f^{-1})|&\leq\frac{1}{512}\bigg(3|c_1|^4+4|c_1|^2|c_2|+32|c_2|^2+16|c_1|(1-|c_1|^2-\frac{|c_2|^2}{1+|c_1|})\bigg)\notag\\&=\alpha(|c_1|,|c_2|).
\end{align*}
We set $x=|c_1|$ and $y=|c_2|$. Then $|H_{2,2}(f^{-1})|\leq\alpha(x,y)$
where $\alpha(x,y)=\bigg(3x^4+4x^2y+32y^2+16x\left(1-x^2-\frac{y^2}{1+x}\right)\bigg)\bigg/512.$
 Next,  we  find the maximum value of the function $\alpha(x,y)$ over $\Lambda.$ We consider the following two cases.
\begin{itemize}
\item [$(A_1)$] We are taking the interior point of $\Lambda.$ The solution of the system of equation ${\partial \alpha(x,y)}/{\partial x}=0$ and ${\partial \alpha(x,y)}/{\partial y}=0$ refers as a critical point of the function $\alpha.$ On solving  ${\partial \alpha}/{\partial y}=4x^2+64y-(32xy/(1+x))=0,$ we have
\[4x^2+32y(2-\frac{x}{1+x})=0,\] which is not valid   in the interior of $\Lambda.$ Thus, the function $\alpha$ has no maximum value in the interior of $\Lambda.$
\item [$(A_2)$] Next, we consider the boundary point of  $\Lambda$ 
\begin{enumerate}
\item [(i)]$\alpha(0,y)={32y^2}/{512}\leq{1}/{16}$ for all $0\leq y\leq1,$
\item[(ii)]$\alpha(x,0)=(3x^4-16x^3+16x)/512\leq0.0128$ for all $0\leq x\leq1,$
\item[(iii)]$\alpha(x,1-x^2)=(15x^4-44x^2+32)/512\leq{1}/{16}$ for all $0\leq x\leq1.$
\end{enumerate}
\end{itemize}
Therefore, we obtain the bound 1/16 on $|H_{2,2}(f^{-1})|,$   which is sharp and sharpness follows by the function $g_1$ defined in \eqref{eqt7}.\qedhere
\end{proof}

\begin{cor}
If the function  $f(z)=z+\sum_{n=2}^{\infty}a_nz^n\in \mathcal{S}_{S,L}^*,$ then 
\[|H_{2,2}(f^{-1})-H_{2,2}(f)|\leq\frac{1}{128}.\]
The inequality is sharp.
\end{cor}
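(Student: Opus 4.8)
The plan is to reduce the difference to a single coefficient product and then optimize over one real variable, exactly paralleling the corollary already proved for $\mathcal{S}_{S,e}^*$. Recalling from \eqref{eq101} that $H_{2,2}(f^{-1}) = a_2a_4 - a_3^2 - a_2^2(a_3 - a_2^2)$ while $H_{2,2}(f) = a_2a_4 - a_3^2$, the $a_2a_4$ and $a_3^2$ terms cancel and we obtain the clean identity
\[
H_{2,2}(f^{-1}) - H_{2,2}(f) = -a_2^2(a_3 - a_2^2),
\]
so that $|H_{2,2}(f^{-1}) - H_{2,2}(f)| = |a_2|^2\,|a_3 - a_2^2|$. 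This step requires nothing beyond the expansions already established.

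Next I would insert the Schwarz-coefficient representations \eqref{eq78} and \eqref{eq79}, namely $a_2 = c_1/4$ and $a_3 = (-c_1^2 + 4c_2)/16$, which give $a_2^2 = c_1^2/16$ and $a_3 - a_2^2 = (2c_2 - c_1^2)/8$. Hence, after the triangle inequality,
\[
|H_{2,2}(f^{-1}) - H_{2,2}(f)| = \frac{|c_1|^2}{128}\,\bigl|2c_2 - c_1^2\bigr| \le \frac{|c_1|^2}{128}\bigl(2|c_2| + |c_1|^2\bigr).
\]

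The final step is an elementary one-variable optimization. Applying the bound $|c_2| \le 1 - |c_1|^2$ from Lemma \ref{eqlem3} and writing $x = |c_1| \in [0,1]$, the right-hand side is dominated by $\frac{1}{128}x^2(2 - x^2)$. Since the derivative of $x^2(2-x^2) = 2x^2 - x^4$ equals $4x(1-x^2) \ge 0$ on $[0,1]$, this expression is increasing and attains its maximum $1$ at $x = 1$, which yields the bound $1/128$. At the extremal configuration $|c_1| = 1$ (which forces $c_2 = 0$) the Schwarz function is $w(z) = z$, corresponding to the function $g_2$ in \eqref{eqt72} with $a_2 = 1/4$ and $a_3 = -1/16$; a direct check gives $a_2^2(a_3 - a_2^2) = -1/128$, confirming sharpness.

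Honestly, there is no serious obstacle here: the algebraic cancellation in the first display is the only place where one might slip, and the remainder is monotonicity rather than a genuine extremal problem. The entire difficulty is bookkeeping, so I would merely verify carefully that the $a_2a_4$ and $a_3^2$ contributions to $H_{2,2}$ cancel exactly and that the correct Schwarz estimate $|c_2|\le 1 - |c_1|^2$ (not a $\mathcal{P}$-class functional bound) is the one invoked.
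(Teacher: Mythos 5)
Your proposal is correct and follows essentially the same route as the paper: the identity $H_{2,2}(f^{-1})-H_{2,2}(f)=-a_2^2(a_3-a_2^2)$, the substitutions $a_2=c_1/4$, $a_3=(-c_1^2+4c_2)/16$, the Schwarz bound $|c_2|\le 1-|c_1|^2$ leading to $\frac{1}{128}(2|c_1|^2-|c_1|^4)\le\frac{1}{128}$, and sharpness via $g_2$. Your write-up is in fact slightly more complete than the paper's, since you spell out the monotonicity of $x^2(2-x^2)$ and verify the extremal value explicitly.
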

\begin{proof}
Since $f\in \mathcal{S}_{S,L}^*$ and $H_{2,2}(f^{-1})=A_2A_4-A_3^2=H_{2,2}(f)-a_2^2(a_3-a_2^2),$ then 
by using \eqref{eq78} and \eqref{eq79}, we get
\[|H_{2,2}(f^{-1})-H_{2,2}(f)|\leq|a_2^2| |(a_3-a_2^2)|\leq\frac{1}{128}(|c_1|^4+2|c_1|^2|c_2|)\leq\frac{1}{128},\]
where $0\leq|c_1|\leq1$ and $0\leq |c_2|\leq 1-|c_1|^2.$ The inequality is sharp for the  function $g_2$ given by \eqref{eqt72}.\qedhere
\end{proof}

\begin{thm}
If the function $f(z)=z+\sum_{n=2}^{\infty}a_nz^n\in \mathcal{S}_{S,L}^*,$ then $|H_{2,3}(f^{-1})|\leq\dfrac{3}{64}.$
The inequality is sharp.
\end{thm}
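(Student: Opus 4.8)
The plan is to follow the same route used for the class $\mathcal{S}_{S,e}^*$ in the earlier $|H_{2,3}(f^{-1})|$ theorem, but now feeding in the Schwarz-function coefficients of the class $\mathcal{S}_{S,L}^*$ recorded in \eqref{eq78}--\eqref{eq81}. First I would substitute $a_3$, $a_4$ and $a_5$ from \eqref{eq79}, \eqref{eq80} and \eqref{eq81} into the identity \eqref{eq102}, namely $H_{2,3}(f^{-1})=a_3a_5-a_4^2-3a_3^3$, so as to express the determinant as a single polynomial in the Schwarz coefficients $c_1,c_2,c_3,c_4$. After clearing denominators this yields an expression of the form $H_{2,3}(f^{-1})=\frac{1}{N}\,P(c_1,c_2,c_3,c_4)$ for a suitable normalising constant $N$.

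Next I would apply the triangle inequality to pass to $|c_1|,|c_2|,|c_3|,|c_4|$, and then invoke Lemma \ref{eqlem3} to replace $|c_3|$ and $|c_4|$ by their majorants $1-|c_1|^2-\frac{|c_2|^2}{1+|c_1|}$ and $1-|c_1|^2-|c_2|^2$ respectively. Writing $x=|c_1|$ and $y=|c_2|$, this reduces the problem to maximising a two-variable function $\Phi(x,y)$ over the compact region $\Lambda=\{(x,y):0\le x\le 1,\ 0\le y\le 1-x^2\}$, exactly in parallel with the preceding theorems.

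I would then carry out the maximisation in two cases. In the interior I would show that the system $\partial\Phi/\partial x=0$, $\partial\Phi/\partial y=0$ has no common solution, so that no interior maximum occurs; as in the analogous arguments, the decisive point is that $\partial\Phi/\partial y$ stays strictly positive because each of its summands is nonnegative for $(x,y)$ interior to $\Lambda$. On the boundary I would check the three edges $x=0$, $y=0$ and $y=1-x^2$, each of which collapses to a one-variable polynomial inequality; the extremal value $3/64$ is produced at the corner $(x,y)=(0,1)$, that is $c_1=0$, $|c_2|=1$. Comparing the three edge maxima then delivers the bound $|H_{2,3}(f^{-1})|\le 3/64$.

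For sharpness I would exhibit the function $g_1$ from \eqref{eqt7}: it has $a_2=0$ and $a_3=1/4$, which forces $c_1=0$, $c_2=1$ and $c_3=c_4=0$, whence $a_4=a_5=0$ and $H_{2,3}(f^{-1})=-3a_3^3=-3/64$, so the bound is attained. The main obstacle is twofold: the bookkeeping in the polynomial $P$ is heavier here than for $\mathcal{S}_{S,e}^*$ because of the larger denominators in \eqref{eq81}, and, more importantly, verifying rigorously that $\Phi$ has no interior critical point is delicate, since the partial derivatives inherit the $\frac{y^2}{1+x}$ term from Lemma \ref{eqlem3} and therefore do not factor cleanly; the argument must instead show that the positive contributions dominate throughout the open region.
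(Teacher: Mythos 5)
Your proposal matches the paper's proof essentially step for step: substitute \eqref{eq79}--\eqref{eq81} into \eqref{eq102}, apply the triangle inequality together with Lemma \ref{eqlem3}, maximise the resulting two-variable function over $\Lambda$ (no interior critical point; edge analysis giving the maximum $3/64$ at the corner corresponding to $c_1=0$, $|c_2|=1$), and invoke $g_1$ from \eqref{eqt7} for sharpness. One correction: your claim that $\partial\Phi/\partial y$ stays positive because ``each of its summands is nonnegative'' is false --- the summand $\frac{1}{64}\bigl(1-x^2-\frac{y^2}{1+x}\bigr)^2$ contributes $-\frac{y}{16(1+x)}\bigl(1-x^2-\frac{y^2}{1+x}\bigr)<0$ in the interior of $\Lambda$ --- so, as you yourself concede at the end, one must actually show that the positive terms dominate there; this is precisely the point that the paper also leaves unverified when it asserts that its equation $\partial\mu/\partial y=0$ has no interior solution.
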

\begin{proof}
Since $f\in \mathcal{S}_{S,L}^*,$ from  expressions \eqref{eq79}, \eqref{eq80} and \eqref{eq81}, we have
\begin{align}
a_3a_5=&\frac{1}{2048}(c_1^6-8c_1^4c_2+16c_1^2c_2^2+8c_1^3c_3-32c_1c_2c_3-16c_1^2c_4+64c_2c_4),\label{eq90}\\
a_4^2=&\frac{1}{16384}(c_1^6+16c_1^2c_2^2-8c_1^4c_2-128c_1c_2c_3+256c_3^2+32c_1^3c_3),\label{eq91}\\
3a_3^3=&\frac{3}{4096}(-c_1^6+64c_2^3+12c_1^4c_2-48c_1^2c_2^2).\label{eq92}
\end{align}
In view of  \eqref{eq90}, \eqref{eq91}, \eqref{eq92} and \eqref{eq102}, we get
\begin{align*}
16384H_{2,3}(f^{-1})=  (19c_1^6-200c_1^4c_2+688c_1^2c_2^2-768c_2^3+32c_1^3c_3&-128c_1c_2c_3-256c_3^2\notag\\&-128c_1^2c_4+512c_2c_4).
\end{align*}
On applying Lemma \ref{eqlem3} in  the above expression, 
 we get
\begin{align}\label{eq95}
|H_{2,3}(f^{-1})|\leq \frac{1}{16384} \bigg(19|c_1|^6&+200|c_1|^4|c_2|+688|c_1|^2|c_2|^2+768|c_2|^3\notag\\&+32|c_1|^3(1-|c_1|^2-\frac{|c_2|^2}{1+|c_1|})\notag\\&+128|c_1| |c_2| (1-|c_1|^2-\frac{|c_2|^2}{1+|c_1|})\notag\\&+256(1-|c_1|^2-\frac{|c_2|^2}{1+|c_1|})^2\notag\\&+128|c_1|^2(1-|c_1|^2-|c_2|^2)\notag\\&+512|c_2|(1-|c_1|^2-|c_2|^2)\bigg).
\end{align}
On  letting $x=|c_1|$ and $y=|c_2|,$ the equation \eqref{eq95} becomes
$|H_{2,3}(f^{-1})|\leq\mu(x,y)$
where 
\begin{align*}
\mu(x,y)=\frac{1}{16384} \bigg(19x^6&+200x^4y+688x^2y^2+768y^3+32x^3(1-x^2-\frac{y^2}{1+x})\notag\\&+128xy (1-x^2-\frac{y^2}{1+x})+256(1-x^2-\frac{y^2}{1+x})^2\notag\\&+128x^2(1-x^2-y^2)+512y(1-x^2-y^2)\bigg).
\end{align*}
We find the maximum value of the function $\mu(x,y)$ over the region $\Lambda=\{(x,y):0\leq x \leq 1, 0\leq y \leq 1-x^2\}.$  We  consider following two cases:
\begin{itemize}
\item [$(A_1)$] On boundary point of $\Lambda,$ we have
\begin{align*}
\mu(0,y)=&\frac{1}{16384}{(256y^4+256y^3-512y^2+512y+256)}\leq\frac{3}{64},\quad 0\leq y\leq1,\\
\mu(x,0)=&\frac{1}{16384}\bigg(19x^6+(1-x^2)(32x^3-128x^2+256)\bigg)\leq\frac{1}{64}, \quad 0\leq x\leq1,\\
\mu(x,1-x^2)=&\frac{1}{16384}(19x^6+768(1-x^2)^3+1584x^2(1-x^2)^2\\
&\qquad\qquad\qquad\qquad\quad\qquad\quad+360x^4(1-x^2))\leq\frac{3}{64},\quad 0\leq x\leq1.
\end{align*}
\item [$(A_2)$] In the interior of  $\Lambda$, the equation   ${\partial \mu}/{\partial y}=0$  gives 
\begin{align*}
200x^4-\frac{64x^3y}{1+x}+1120x^2y+128x(1-x^2-\frac{3y^2}{1+x})&-\frac{1024y}{1+x}(1-x^2-\frac{y^2}{1+x})\notag\\&+512(1-x^2)+768y^2=0
\end{align*}
which has no solution in the interior of $\Lambda$. 
Therefore the function $\mu$ has  no maximum value.
\end{itemize} 
Hence,  we conclude that the maximum value of the function $\mu$ is $3/64.$ Sharpness follows for the function $g_1$ defined in \eqref{eqt7}.\qedhere
\end{proof}

\begin{cor}
If the function $f(z)=z+\sum_{n=2}^{\infty}a_nz^n\in \mathcal{S}_{S,L}^*,$ then $$|H_{2,3}(f^{-1})-H_{2,3}(f)|\leq\frac{3}{64}.$$
The inequality is sharp for the function $g_1$ defined by \eqref{eqt7}.
\end{cor}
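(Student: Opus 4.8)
The plan is to recognize that the difference of the two Hankel determinants collapses to a single monomial, so that the whole corollary reduces to a one-coefficient estimate. First I would compare the definitions \eqref{eq102} and \eqref{eq104}: since $H_{2,3}(f^{-1}) = a_3a_5 - a_4^2 - 3a_3^3$ and $H_{2,3}(f) = a_3a_5 - a_4^2$, the common part $a_3a_5 - a_4^2$ cancels, leaving
\[
H_{2,3}(f^{-1}) - H_{2,3}(f) = -3a_3^3,
\]
so that $|H_{2,3}(f^{-1}) - H_{2,3}(f)| = 3|a_3|^3$. Thus the problem is reduced entirely to finding the sharp bound on $|a_3|$ over the class $\mathcal{S}_{S,L}^*$.

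Next I would invoke the Schwarz-function representation already set up in this section. For $f \in \mathcal{S}_{S,L}^*$ the coefficient $a_3$ is given by \eqref{eq79}, namely $a_3 = \tfrac{1}{16}(-c_1^2 + 4c_2)$, where $w(z) = c_1z + c_2z^2 + \cdots$ is the associated Schwarz function. Applying the triangle inequality and then the bound $|c_2| \leq 1 - |c_1|^2$ from Lemma \ref{eqlem3} gives
\[
|a_3| \leq \frac{1}{16}\left(|c_1|^2 + 4|c_2|\right) \leq \frac{1}{16}\left(|c_1|^2 + 4(1-|c_1|^2)\right) = \frac{1}{16}\left(4 - 3|c_1|^2\right) \leq \frac{1}{4},
\]
with equality forced when $c_1 = 0$ and $|c_2| = 1$. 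Combining the two steps yields $|H_{2,3}(f^{-1}) - H_{2,3}(f)| = 3|a_3|^3 \leq 3\cdot(1/4)^3 = 3/64$, which is the asserted bound.

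For sharpness I would check the function $g_1$ of \eqref{eqt7}, which has $a_2 = 0$ and $a_3 = 1/4$; then $3|a_3|^3 = 3/64$, so equality is attained. I expect no genuine obstacle here. Unlike the preceding theorems, which required optimizing a two-variable function $\Phi$ or $\mu$ over the region $\Lambda$ and checking the boundary and interior separately, this corollary has no optimization step beyond the elementary maximization of $4 - 3|c_1|^2$. The only point needing a moment's care is the algebraic observation that the two full determinants differ by exactly $-3a_3^3$; once that cancellation is identified, the sharp estimate $|a_3| \leq 1/4$ is immediate from the standard Schwarz-coefficient inequalities.
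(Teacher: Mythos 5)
Your proposal is correct and follows essentially the paper's own route: both identify the cancellation $H_{2,3}(f^{-1})-H_{2,3}(f)=-3a_3^3$ from \eqref{eq102} and \eqref{eq104}, then bound the result using $a_3=\tfrac{1}{16}(-c_1^2+4c_2)$ from \eqref{eq79} together with Lemma \ref{eqlem3}, with sharpness from $g_1$. The only cosmetic difference is that you keep the cube intact and bound $|a_3|\le\tfrac14$ directly, whereas the paper expands $(-c_1^2+4c_2)^3$ and maximizes the resulting two-variable expression $v(|c_1|,|c_2|)$ over $\Lambda$ --- the identical optimization, since that expression equals $\tfrac{3}{4096}\left(|c_1|^2+4|c_2|\right)^3$.
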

\begin{proof}
Since $f\in \mathcal{S}_{S,L}^*,$ and $H_{2,3}(f^{-1})=A_3A_5-A_4^2=H_{2,3}(f)-3a_3^3$,  then
using \eqref{eq79},  we get
\begin{align*}
|H_{2,3}(f^{-1})-H_{2,3}(f)|&=\frac{3}{4096}\bigg|-c_1^6+64c_2^3+12c_1^4c_2-48c_1^2c_2^2\bigg|\notag\\
&\leq\frac{3}{4096}\bigg(|c_1|^6+64|c_2|^3+12|c_1|^4|c_2|+48|c_1|^2|c_2|^2\bigg)=v(|c_1|,|c_2|).
\end{align*} 
Proceeding as similar lines of previous theorem, we get the maximum value of the function $v(|c_1|, |c_2|)$ in the region  $\Lambda=\{(|c_1|,|c_2|):0\leq |c_1| \leq 1, 0\leq |c_2| \leq 1-|c_1|^2\}.$  \qedhere
\end{proof}

\section{Hermitian-Toeplitz  determinants }
Recently, authors~\cite{ckms22} computed sharp bounds on third order Hermitian-Toeplitz determinants involving initial coefficients for the classes $\mathcal{S}^*_{S, \,e}$ and $\mathcal{S}^*_{S,\, L}$. In this section, we determine sharp bound on second Hermitian-Toeplitz determinants involving initial logarithmic coefficients and logarithmic inverse coefficients for the classes $\mathcal{S}^*_{S, \,e}$ and $\mathcal{S}^*_{S,\, L}$. Next result show that the bound on $T_{2,1}(	F_{f}/\gamma)$ and $T_{2,1}(	F_{f^{-1}}/\Gamma)$ are invariant for the functions $f\in\mathcal{S}_{S,e}^*.$
\begin{thm}
If the function $f\in \mathcal{S}_{S,e}^*,$ then 
\[-\frac{1}{16}\leq T_{2,1}(	F_{f}/\gamma), T_{2,1}(	F_{f^{-1}}/\Gamma)\leq\frac{15}{256}.\]
The lower and upper bounds are sharp.
\end{thm}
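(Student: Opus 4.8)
The plan is to reduce both determinants to an optimisation over the Schwarz coefficients $c_1,c_2$ and then exploit the symmetry between the two expressions. Since $\mathcal{S}_{S,e}^*$ is invariant under the rotation $f(z)\mapsto e^{-i\theta}f(e^{i\theta}z)$ and the Hermitian--Toeplitz functional (in its genuine form $|\gamma_1|^2-|\gamma_2|^2$) is unchanged by this rotation, I may assume $a_2\ge 0$, so that $c_1=2a_2\in[0,1]$ is real and the formulas \eqref{eq105} and \eqref{eq107} apply. Substituting \eqref{eq7}, \eqref{eq8} into \eqref{eq105} I expect, after cancellation,
\[16\,T_{2,1}(F_f/\gamma)=-\tfrac{1}{16}c_1^4+c_1^2-\tfrac12 c_1^2\,\Re c_2-|c_2|^2,\]
while inserting $A_2=-a_2$ and $A_3=-a_3+2a_2^2$ from \eqref{eq11}, \eqref{eq12} into \eqref{eq107} produces the same expression with the single change $\Re c_2\mapsto-\Re c_2$:
\[16\,T_{2,1}(F_{f^{-1}}/\Gamma)=-\tfrac{1}{16}c_1^4+c_1^2+\tfrac12 c_1^2\,\Re c_2-|c_2|^2.\]

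In both expressions $\Im c_2$ enters only through $-|c_2|^2$; for the maximum one takes $\Im c_2=0$ immediately, and for the minimum one first pushes $|c_2|$ to the boundary circle $|c_2|=1-c_1^2$ of Lemma \ref{eqlem3} and then optimises the linear term $\mp\tfrac12 c_1^2\Re c_2$, which again forces $c_2$ real. Writing $x=c_1\in[0,1]$ and $t=\Re c_2=c_2$ the task becomes extremising
\[G(x,t)=-\tfrac{1}{16}x^4+x^2\mp\tfrac12 x^2 t-t^2\]
over $\Lambda=\{(x,t):0\le x\le1,\ |t|\le 1-x^2\}$. Since $G$ is concave in $t$, its minimum lies on the edges $t=\pm(1-x^2)$, where $G$ reduces to a quartic in $x$ that is increasing on $[0,1]$; hence the minimum is at $x=0$, giving $G=-1$ and $T_{2,1}=-\tfrac1{16}$. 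For the maximum I would compare the interior critical value from $\partial G/\partial t=0$ (namely $t=\mp x^2/4$, which yields $G=x^2\le\tfrac45$) with the boundary edge $t=\mp(1-x^2)$, where $G=-\tfrac{25}{16}x^4+\tfrac72 x^2-1$ is increasing and attains $\tfrac{15}{16}$ at $x=1$; as $\tfrac{15}{16}>\tfrac45$ the maximum is $G=\tfrac{15}{16}$, i.e. $T_{2,1}=\tfrac{15}{256}$.

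The sign difference between the two displayed expressions is exactly the substitution $t\mapsto-t$, under which $\Lambda$ is symmetric, so the two optimisation problems share identical extrema; this both yields the stated bounds for $T_{2,1}(F_{f^{-1}}/\Gamma)$ and proves the asserted invariance. For sharpness I would exhibit the extremal Schwarz functions: $w(z)=z$ (so $c_1=1,\ c_2=0$, whence $a_2=\tfrac12,\ a_3=\tfrac14$) realises the upper bound and corresponds to $f_2$ in \eqref{eqt2}, while $w(z)=z^2$ (so $c_1=0,\ c_2=1$, whence $a_2=0,\ a_3=\tfrac12$) realises the lower bound and corresponds to $f_1$ in \eqref{eqt1}; for the inverse determinant the lower extremal instead uses $w(z)=-z^2$.

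I expect the genuine obstacle to be the maximisation: the interior critical curve gives only $G=x^2\le\tfrac45$, so one must not stop there but track the boundary edge, where the true maximum $\tfrac{15}{16}$ sits at the corner $(x,t)=(1,0)$; missing this corner would produce the wrong, too-small constant $\tfrac1{20}$. A secondary care point is the legitimacy of reducing to real $a_2$ and real $c_2$, which rests on the rotational invariance of the Hermitian--Toeplitz functional together with the concavity argument above.
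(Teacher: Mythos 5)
Your proposal is correct, and it reaches the stated bounds by a route that differs in its machinery from the paper's. The paper passes to the Carath\'eodory class: it writes $2zf'(z)/(f(z)-f(-z))=e^{(p(z)-1)/(p(z)+1)}$, obtains $a_2=p_1/4$, $a_3=(-p_1^2+4p_2)/16$, and then invokes the Libera--Zlotkiewicz representation $2p_2=p_1^2+(4-p_1^2)\xi$ (Lemma~\ref{lem1}) to land on
\[
T_{2,1}(F_f/\gamma)=\tfrac{1}{4096}\bigl(-p^4+64p^2-8p^2(4-p^2)\Re\xi-16(4-p^2)^2|\xi|^2\bigr),
\]
after which it treats the upper and lower bounds separately via $\pm\Re\xi\le|\xi|$, maximizing $\Phi(p,x)$ and minimizing $\Psi(p,x)$ over the rectangle $[0,2]\times[0,1]$, and then repeats the computation for $T_{2,1}(F_{f^{-1}}/\Gamma)$ to check it reduces to the same $\Phi$ and $\Psi$. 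You instead stay with the Schwarz coefficients and Carlson's bounds (Lemma~\ref{eqlem3}); the two parametrizations are equivalent (substituting $c_1=p_1/2$, $c_2=(4-p_1^2)\xi/4$ turns your displayed expressions into the paper's equation above), so the content is the same, but your version buys three things: the sign symmetry $\Re c_2\mapsto-\Re c_2$ between the two functionals is visible at a glance, so one optimization handles both determinants and simultaneously explains the invariance; the concavity-in-$t$ argument dispatches the upper and lower bounds in one pass instead of two separate surface/edge analyses; and your appeal to rotation invariance of $|\gamma_1|^2-|\gamma_2|^2$ to normalize $a_2\ge0$ is an explicit justification of what the paper leaves implicit in ``assume $p=p_1\in[0,2]$.'' Your boundary bookkeeping is also sound: the interior critical curve gives only $G=x^2\le\tfrac45$ (feasibility $x^2/4\le1-x^2$), and the true maximum $\tfrac{15}{16}$ sits at the corner $(x,t)=(1,0)$, exactly matching the paper's value $\Phi(2,x)=15/256$. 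One small correction: for the lower bound of $T_{2,1}(F_{f^{-1}}/\Gamma)$ you do not need $w(z)=-z^2$ ``instead''; since the linear term carries the factor $c_1^2$, it vanishes at $c_1=0$, so $w(z)=z^2$, i.e.\ the paper's function $f_1$ of \eqref{eqt1}, is extremal for both functionals, which is precisely the sharpness statement the paper makes.
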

\begin{proof}
Since $f(z)\in\mathcal{S}_{S,e}^*$  then
$\frac{2zf'(z)}{f(z)-f(-z)}= e^{w(z)}$ where $w(z)$ is a Schwarz function defined on   $\mathbb{D}$
or equivalently,
\[\frac{2zf'(z)}{f(z)-f(-z)}=e^\frac{p(z)-1}{p(z)+1}=1+\frac{1}{2}{p_1} z+\frac{1}{8}  (4 {p_2}-{p_1}^2)z^2+\frac{1}{48}  ({p_1}^3-12 {p_1} {p_2}+24 {p_3})z^3+\cdots\]
 where $p\in\mathcal{P}$.
On comparing the coefficients of like powers of $z$ on the both sides, we have 
\begin{align}
a_2=&\frac{1}{4}p_1\label{eq1},\\
a_3=&\frac{1}{16}(-p_1^2+4p_2)\label{eq1a},\\
a_4=&\frac{1}{384}(-p_1^3-12p_1p_2+48p_3)\label{eq1b},\\
a_5=&\frac{1}{384}(p_1^4-24p_1^2p_3+48p_4)\label{eq1c}.
\end{align}
On  substituting the value of $a_2$ and $a_3$  from equation \eqref{eq1} and \eqref{eq1a} in \eqref{eq105}, we have
\begin{align*}
T_{2,1}(	F_{f}/\gamma)=\frac{1}{16}\bigg(-\frac{p_1^4}{256}+\frac{p_1^2}{4}+\frac{p_1^2}{64}\RE(-p_1^2+4p_2)-\frac{1}{64}|-p_1^2+4p_2|^2\bigg).
\end{align*}
On  applying  Lemma \eqref{lem1}, we have
\begin{equation}\label{eq4}
T_{2,1}((	F_{f}/\gamma))=\frac{1}{4096}(-p_1^4+64p_1^2-8p_1^2(4-p_1^2)\Re(\xi)-16(4-p_1^2)^2|\xi|^2).
\end{equation}
Using the fact that $-\Re(\xi)\leq|\xi|,$  setting $x=|\xi|\in[0,1].$ Throughout  this section without loss of generality assume  $p=p_1\in[0,2].$ Thus equation  \eqref{eq4} becomes
\begin{align}\label{eq45}
T_{2,1}(	F_{f}/\gamma)&\leq\frac{1}{4096}(-p^4+64p^2+8p^2(4-p^2)x-16(4-p^2)^2x^2)\notag\\&=\Phi(p,x).
\end{align}
Next, we will obtain  the maximum value obtained by the function $\Phi$ on the rectangular shaped region $\Delta=[0,2]\times[0,1].$ We consider the following  two cases as follows.
\begin{itemize}
\item[ $(A_1)$] On  considering  boundary  of $\Delta,$ we have 
\begin{align*}
 &\Phi(0,x)=\frac{-x^2}{16}\leq0,\quad\Phi(2,x)=\frac{15}{256},\quad\Phi(p,0)=\frac{1}{4096}(-p^4+16p^2)\leq\frac{15}{256},\\&\Phi(p,1)=\frac{1}{4096}(-25p^4+224p^2-256)\leq\frac{15}{256}.
\end{align*}
\item[ $(A_2)$]Next, we consider interior part of $\Delta.$  For a critical point it is necessary that  the system of equations ${\partial \Phi(p,x)}/{\partial p}=0$ and ${\partial \Phi(p,x)}/{\partial x}=0$ has solution. A simple calculation provides that the equation  ${\partial \Phi(p,x)}/{\partial x}=0$  gives a solution $x=p^2/(4(4-p^2))=x_p\in(0,1)$ for $p<\frac{4\sqrt{5}}{5}\approx1.788\in(0,2).$ Further on solving   ${\partial \Phi(p,x)}/{\partial p}=0$ and substituting the value $x_p$, we get $p^5-8p^3+16p=0$ which is not possible for $p\in(0,2).$
Thus, the function $\Phi$ has no maximum value in the interior of $\Delta.$
\end{itemize}
Therefore,  we  get the upper bound ${15}/{256}$ on $T_{2,1}(	F_{f}/\gamma)$ and  sharpness follows by the function $f_2$ defined by \eqref{eqt2}.
Next, we proceed for the minimum value of  $T_{2,1}(f).$ Using  identity $-\Re(\xi)\geq-|\xi|$ and on setting $ x=|\xi|\in[0,1]$ in  \eqref{eq4},  we get
\begin{align}\label{eq46}
T_{2,1}(F_{f}/\gamma)&\geq\frac{1}{4096}(-p_1^4+64p_1^2-8p_1^2(4-p_1^2)x-16(4-p_1^2)^2x^2)=\Psi(p,x).
\end{align}
For the minimum value  of the function $\Psi(p,x)$ on the region $\Delta=[0,2]\times[0,1],$ following two cases arises.
\begin{itemize}
\item [$(B_1)$] First, we consider boundary of $\Delta.$ A simple calculations give following observations:
\begin{align*}
 &\Psi(0,x)=-\frac{x^2}{16}\geq-\frac{1}{16},\quad\Psi(2,x)=\frac{15}{256},\quad\Psi(p,0)=\frac{1}{4096}(-p^4+16p^2)\geq0,
 \\&\Psi(p,1)=\frac{1}{4096}(-9p^4+160p^2-256)\geq-\frac{1}{16}.
\end{align*}
\item [$(B_2)$]  Let $(p,x)\in(0,2)\times(0,1).$ Then   ${\partial \Psi(p,x)}/{\partial x}={-((4-p^2)(p^2+4x(4-p^2)))}/512\neq0.$ Hence, the system of equations ${\partial \Psi}/{\partial p}=0$ and ${\partial \Psi}/{\partial x}=0$  has no solution. The function $\Psi$ has no maximum value. 
\end{itemize}
In view of  $(B_1)$ and $(B_2)$,  the lower bound on $T_{2,1}(F_{f}/\gamma)$ is ${-1}/{16}$ which  is sharp for function $f_1$ defined by \eqref{eqt1}.

Next, in view of  \eqref{eq11}, \eqref{eq12}, \eqref{eq107}, \eqref{eq1} and \eqref{eq1a}, we have
\begin{align*}
T_{2,1}(F_{f^{-1}}/\Gamma)=\frac{1}{4096}(-25p^4+64p^2+80p^2\Re{p_2}-64|p_2|^2).
\end{align*}
On applying Lemma \eqref{lem1},  above expression becomes 
\[T_{2,1}(F_{f^{-1}}/\Gamma)=\frac{1}{4096}(-p^4+64p^2+8p^2(4-p^2)\Re (\xi)-16(4-p^2)^2|\xi|^2).\]
Using the fact $\Re(\xi)\leq|\xi|$  and on setting $x=|\xi|\in[0,1],$ we get 
$T_{2,1}(F_{f^{-1}}/\Gamma)\leq\Phi(p,x),$ which is  given by \eqref{eq45}. Hence $T_{2,1}(F_{f^{-1}}/\Gamma)\leq {15}/{256}.$
Again,  using $\Re(\xi)\geq-|\xi|$ and on setting $x=|\xi|,$
we have $T_{2,1}(F_{f^{-1}}/\Gamma)\geq\Psi(p,x),$ which is given by \eqref{eq46}. Hence $T_{2,1}(F_{f^{-1}}/\Gamma)\geq -{1}/{16}.$
The upper and lower bound  are sharp for the functions defined by \eqref{eqt2} and \eqref{eqt1} respectively.\qedhere
\end{proof}
Next two results prove that the bounds on $T_{2,1}(	F_{f}/\gamma)$ and $T_{2,1}(F_{f^{-1}}/\Gamma)$ for the function  $f\in\mathcal{S}_{S,L}^*$ are not invariant.
\begin{thm}
If the function $f\in \mathcal{S}_{S,	L}^*,$ then 
$$-\frac{1}{64}\leq T_{2,1}(	F_{f}/\gamma)\leq\frac{55}{4096}.$$
The bounds are sharp.
\end{thm}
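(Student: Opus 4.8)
The plan is to follow the template of the preceding theorem, adapted to the coefficient data of $\mathcal{S}_{S,L}^*$. First I would write the subordination \eqref{eqrs} as $2zf'(z)/(f(z)-f(-z))=\sqrt{1+w(z)}$ and pass to a Carath\'eodory function $p\in\mathcal{P}$ through $w=(p-1)/(p+1)$, which yields the coefficient formulas \eqref{eq52} and \eqref{eq53}, namely $a_2=p_1/8$ and $a_3=(-5p_1^2+8p_2)/64$. Substituting these into the expression \eqref{eq105} for $T_{2,1}(F_f/\gamma)$ and invoking Lemma \ref{lem1} to replace $2p_2$ by $p_1^2+(4-p_1^2)\xi$ reduces the determinant to a function of $p_1$, $\Re\xi$ and $|\xi|$ alone. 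Rotating so that $p=p_1\in[0,2]$ (which is legitimate since $a_2$ enters only through $a_2^2$ and $\Re a_3$), I expect to arrive at
\[65536\,T_{2,1}(F_f/\gamma)=-9p^4+256p^2+48p^2(4-p^2)\Re\xi-64(4-p^2)^2|\xi|^2.\]

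Next I would split off the two one-sided estimates exactly as in \eqref{eq45} and \eqref{eq46}. Using $\Re\xi\le|\xi|=:x$ produces an upper-bound function $\Phi(p,x)$, while $\Re\xi\ge-|\xi|$ produces a lower-bound function $\Psi(p,x)$, each to be extremized over the rectangle $\Delta=[0,2]\times[0,1]$. The lower bound is the short half: since $\partial\Psi/\partial x<0$ throughout the interior, the minimum is forced onto the edge $x=1$, where $\Psi$ becomes a concave quadratic in $u=p^2$ whose minimum over $[0,4]$ sits at $u=0$ and equals $-1/64$. This both establishes the sharp lower bound and identifies the extremal as the odd function $g_1$ of \eqref{eqt7} (for which $p_1=0$).

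For the upper bound I would first rule out an admissible interior maximum: solving $\partial\Phi/\partial x=0$ locates the ridge $x=3p^2/(8(4-p^2))$, along which $\Phi$ collapses to the value $p^2/256$, which stays below $1/88$ as long as the ridge remains inside $(0,1)$, hence below the boundary. It then remains to examine the four edges. The cases $p=0$, $x=0$ and $p=2$ are immediate, with $\Phi(2,x)\equiv 55/4096$ dominating the other two. The genuine obstacle is the edge $x=1$: there $\Phi$ reduces to a downward quadratic in $u=p^2$ whose vertex lies inside $[0,4]$, so the decisive step is to confront the value at that interior critical point against the corner value $55/4096$ attained at $p=2$. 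This single comparison is what pins down the sharp constant; the extremal should be the function $g_2$ of \eqref{eqt72}, for which $a_2=1/4$ forces $p_1=2$.

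I expect this edge-$x=1$ comparison to be the one delicate point of the whole argument, precisely because the maximizing $p$ on that edge is an interior critical value rather than an endpoint, so the optimization there is genuinely two-sided and the margin against $55/4096$ is narrow. Everything else is routine boundary bookkeeping on $\Delta$ of the same kind already executed in the earlier theorems, and the sharpness statements follow by exhibiting $g_2$ (upper) and $g_1$ (lower) as the parameter choices $p_1=2$ and $p_1=0$ that saturate the respective estimates.
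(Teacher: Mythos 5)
Your proposal tracks the paper's own proof almost line for line: the same reduction via \eqref{eq52}, \eqref{eq53}, \eqref{eq105} and Lemma \ref{lem1} to the expression $\tfrac{1}{65536}(-9p^4+256p^2+48p^2(4-p^2)\Re\xi-64(4-p^2)^2|\xi|^2)$, the same rectangle $\Delta=[0,2]\times[0,1]$, and the same edge bookkeeping; your lower-bound half and your ridge argument for interior points (the value $p^2/256\le 1/88$ along $x=3p^2/(8(4-p^2))$) are correct, and in fact cleaner than the paper's. The fatal problem is the one step you explicitly deferred: the comparison on the edge $x=1$ goes the \emph{wrong way}. There your $\Phi$ (the paper's $\kappa$) becomes
\[
\Phi(p,1)=\frac{1}{65536}\left(-121p^4+960p^2-1024\right),
\]
a concave quadratic in $u=p^2$ with vertex at $u=480/121\in(0,4)$, and its value there is
\[
\frac{1}{65536}\left(\frac{230400}{121}-1024\right)=\frac{106496}{121\cdot 65536}=\frac{13}{968},
\]
which exceeds $\tfrac{55}{4096}$, since $13\cdot 4096=53248>53240=55\cdot 968$. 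So the maximum of the majorant over $\Delta$ is $13/968$ attained at an interior point of that edge, not at the corner $p=2$; the argument cannot close as planned, and $g_2$ is not the extremal there.

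Moreover, this is not a defect you can repair by a sharper estimate, because the offending point is attainable within the class. Taking $\xi=1$ in Lemma \ref{lem1} forces $p_2=2$, and the pair $p_1=4\sqrt{30}/11$, $p_2=2$ is realized by the Carath\'eodory function $p(z)=\lambda\frac{1+z}{1-z}+(1-\lambda)\frac{1-z}{1+z}$ with $\lambda=\frac12+\frac{\sqrt{30}}{11}\in(0,1)$; the corresponding $f\in\mathcal{S}_{S,L}^*$ generated by the subordination \eqref{eqrs} has $a_2^2=15/242$ and $a_3=-29/484$, and substituting into \eqref{eq105} gives $T_{2,1}(F_f/\gamma)=13/968>55/4096$. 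Thus the stated upper bound $55/4096$ is itself false, with sharp constant $13/968$, and the paper's proof commits exactly the oversight your plan would have committed: it asserts $\kappa(p,1)=\frac{1}{65536}(-121p^4+960p^2-1024)\le\frac{55}{4096}$ without examining the interior vertex of that quadratic. Your instinct that this edge is the one delicate point of the whole argument was exactly right; carrying the computation through shows the margin against $55/4096$ is not narrow but negative.
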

\begin{proof}
Since $f\in \mathcal{S}_{S,	L}^*,$ on  substituting the value of $a_2$ and $a_3$  from \eqref{eq52} and \eqref{eq53} in  \eqref{eq105},  we get
\begin{equation}\label{eq70}
T_{2,1}(	F_{f}/\gamma)=\frac{1}{16}\bigg(-\frac{p_1^4}{4096}+\frac{p_1^2}{16}-\frac{5p_1^4}{1024}+\frac{p_1^2}{128}\Re(p_2)-\frac{1}{1024}|-5p_1^2+8p_2|^2\bigg).
\end{equation}
By making use Lemma \eqref{lem1} in  \eqref{eq70}, we get
\begin{equation}\label{eq71}
 T_{2,1}(	F_{f}/\gamma)=\frac{1}{65536}(-9p^4+256p^2+48p^2(4-p^2)\Re(\xi)-64(4-p^2)^2|\xi|^2).
\end{equation}
Using  $\Re(\xi)\leq|\xi|,$ the equation \eqref{eq71} becomes
\begin{align*}
 T_{2,1}(	F_{f}/\gamma)&\leq\frac{1}{65536}(-9p^4+256p^2+48p^2(4-p^2)|\xi|-64(4-p^2)^2|\xi|^2)\notag\\&=\kappa(p,|\xi|).
\end{align*}
On setting $|\xi|=x\in[0,1]$ and for  $p\in[0,2]$, we get $\kappa(p,x)=(-9p^4+256p^2+48p^2(4-p^2)x-64(4-p^2)^2x^2)/65536.$ To find the maximum value $\kappa$ over  $\Delta=[0,2]\times[0,1]$, we consider the following two cases.
\begin{itemize}
\item[ $(A_1)$] On the boundary of $\Delta$, we have 
\begin{align*}
 &\kappa(0,x)=\frac{-x^2}{64}\leq0,\quad\kappa(2,x)=\frac{55}{4096},\quad\kappa(p,0)=\frac{1}{65536}(-9p^4+256p^2)\leq\frac{55}{4096},\\&\kappa(p,1)=\frac{1}{65536}(-121p^4+960p^2-1024)\leq\frac{55}{4096}.
\end{align*}
\item[ $(A_2)$] In the interior of $\Delta$, for the existence of  critical point,    the system of equations ${\partial \kappa(p,x)}/{\partial p}=0$ and ${\partial \kappa(p,x)}/{\partial x}=0$ should have a solution. The equation  ${\partial \kappa(p,x)}/{\partial x}=0$  gives a solution $x=3p^2/(8(4-p^2))=x_p\in(0,1)$ when $p<{\sqrt{\frac{32}{11}}}\in(0,2).$ Further, on solving   ${\partial \kappa(p,x)}/{\partial p}=0$ and substituting the value $x_p$, we get $2p^5-16p^3+32p=0$ which is not possible for $p\in(0,2).$
The function $\kappa$ has no maximum value in the interior  of $\Delta.$
\end{itemize}
In view of cases $(A_1)$ and $(A_2)$ we  get the desired upper bound ${55}/{4096}$ on  $T_{2,1}(F_{f}/\gamma)$. This upper bound is sharp for  the function $g_2$ defined  by \eqref{eqt72}.

Next, by using  $\Re(\xi)\geq-|\xi|$ and  on setting   $x=|\xi|\in[0,1]$ in  \eqref{eq71},  we get
\begin{align*}
 T_{2,1}(	F_{f}/\gamma)&\geq\frac{1}{65536}(-9p^4+256p^2-48p^2(4-p^2)x-64(4-p^2)^2x^2)=G(p,x).
\end{align*}
For $(p,x)\in(0,2)\times(0,1)$, note that ${\partial G(p,x)}/{\partial x}={-((4-p^2)(3p^2+8x(4-p^2)))}/4096\neq 0$. The system of equations ${\partial G}/{\partial p}=0$ and ${\partial G}/{\partial x}=0$  has no common  solution. Therefore, the function $G$ has no minimum value in the interior of $\Delta$. On the boundary of $\Delta,$ a simple calculation yields
\begin{align*}
 &G(0,x)=-\frac{x^2}{64}\geq-\frac{1}{64},\quad G(2,x)=\frac{55}{4096},\quad G(p,0)=\frac{1}{65536}(-9p^4+256p^2)\geq0,\\&G(p,1)=\frac{1}{4096}(-25p^4+576p^2-1024)\geq-\frac{1}{64}.
\end{align*}
Therefore, the  lower bound on $T_{2,1}(	F_{f}/\gamma)$ is $-{1}/{64}.$ Sharpness follows for the function $g_1$ defined by \eqref{eqt7}.
\end{proof}

\begin{thm}
Let the function $f\in \mathcal{S}_{S, L}^*$. Then 
\[-\frac{1}{64}\leq T_{2,1}(	F_{f^{-1}}/\Gamma)\leq\frac{39}{4096}.\]
The  inequality is sharp.
\end{thm}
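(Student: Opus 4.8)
The plan is to follow the same scheme that produced the two preceding Hermitian--Toeplitz estimates. Since $f\in\mathcal{S}_{S,L}^*$, I would first combine the inverse--coefficient relations \eqref{eq11}--\eqref{eq12} with the class coefficients \eqref{eq52}--\eqref{eq53} to write $A_2=-a_2=-p_1/8$ and $A_3=-a_3+2a_2^2=(7p_1^2-8p_2)/64$, where $p\in\mathcal{P}$. Substituting these, together with $\Gamma_1=\tfrac12 A_2$ and $\Gamma_2=\tfrac12(A_3-\tfrac12 A_2^2)$ from \eqref{eq40}--\eqref{eq41}, into \eqref{eq107} expresses $T_{2,1}(F_{f^{-1}}/\Gamma)$ purely through $p_1,p_2$. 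As in the earlier proofs I would then apply the representation of Lemma~\ref{lem1}, namely $2p_2=p_1^2+(4-p_1^2)\xi$ with $\xi\in\overline{\mathbb{D}}$, and assume without loss of generality that $p_1=p\in[0,2]$. This should yield an expression of the form
\[
T_{2,1}(F_{f^{-1}}/\Gamma)=\frac{1}{65536}\Bigl(-25p^4+256p^2+80p^2(4-p^2)\,\Re\xi-64(4-p^2)^2|\xi|^2\Bigr),
\]
entirely parallel to \eqref{eq71} but with the constants $-25$ and $80$ in place of $-9$ and $48$.

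Next I would split the task into the two one--sided problems. For the upper bound I would use $\Re\xi\le|\xi|$ and set $x=|\xi|\in[0,1]$, reducing matters to maximising
\[
\Phi(p,x)=\frac{1}{65536}\Bigl(-25p^4+256p^2+80p^2(4-p^2)x-64(4-p^2)^2x^2\Bigr)
\]
over the rectangle $\Delta=[0,2]\times[0,1]$; for the lower bound I would instead use $\Re\xi\ge-|\xi|$, obtaining the companion function $\Psi(p,x)$ in which the linear term changes sign, to be minimised over $\Delta$. Each extremisation is then handled exactly as before: one first checks that $\partial\Phi/\partial x$ (respectively $\partial\Psi/\partial x$) has no admissible zero in the interior, so that no interior critical point survives, and then restricts to the four edges $p=0$, $p=2$, $x=0$ and $x=1$, on each of which the reduced one--variable polynomial is estimated directly.

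The main obstacle is the edge $x=1$, where $\Phi(p,1)$ collapses to a quartic in $p$ on $[0,2]$; the sharp constant is governed by whether the maximum of this quartic is attained at the corner $p=2$ or at an interior value of $p$, and it is precisely this comparison that must be carried out carefully, since it fixes both the final constant and the extremal configuration in the Libera inequality. Once the optimal value of $\Phi$ is pinned down, I would confirm sharpness of the upper bound by testing the candidate $g_2$ from \eqref{eqt72} (which realises $p_1=2$), and sharpness of the lower bound by the function $g_1$ from \eqref{eqt7} (for which $a_2=0$, i.e.\ $p=0$). The lower half is expected to be the easier one, since there $\partial\Psi/\partial x$ is manifestly nonvanishing on the interior and the minimum $-1/64$ is forced onto the edges $p=0$ and $x=1$.
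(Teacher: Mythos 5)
Your setup coincides exactly with the paper's: the same coefficient computation giving $A_2=-p_1/8$ and $A_3=(7p_1^2-8p_2)/64$, the same application of Lemma~\ref{lem1}, the identical representation
\[
T_{2,1}(F_{f^{-1}}/\Gamma)=\frac{1}{65536}\bigl(-25p^4+256p^2+80p^2(4-p^2)\Re\xi-64(4-p^2)^2|\xi|^2\bigr),
\]
(the paper's \eqref{eq75}), and the same two one-sided optimizations over $[0,2]\times[0,1]$. The lower-bound half of your plan is correct and agrees with the paper: there $\partial\Psi/\partial x<0$ in the interior, and the edge values force the minimum $-1/64$, attained by $g_1$. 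On the upper-bound half, one caveat first: your claim that $\partial\Phi/\partial x$ has no admissible interior zero is not true. It vanishes along $x_p=5p^2/(8(4-p^2))$, which lies in $(0,1)$ whenever $p<\sqrt{32/13}$; one must then argue, as the paper does, that $\partial\Phi/\partial p\neq0$ there, or simply note that along this curve $\Phi(p,x_p)=p^2/256\le 1/104$, so no interior maximum arises.

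The genuine gap is the step you explicitly deferred, the maximization of $\Phi(p,1)$ on $[0,2]$, and it is fatal: one has $\Phi(p,1)=\frac{1}{65536}(-169p^4+1088p^2-1024)$, and in the variable $u=p^2$ this concave quadratic peaks at $u=544/169\approx3.22$, which is \emph{interior} to $(0,4)$. The maximum is therefore not the corner value $\Phi(2,1)=624/65536=39/4096$ but
\[
\Phi\!\left(\tfrac{4\sqrt{34}}{13},1\right)=\frac{122880}{169\cdot65536}=\frac{15}{1352}\approx 0.0111>\frac{39}{4096}\approx 0.0095,
\]
so the scheme cannot deliver the claimed constant. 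Worse, this is not mere looseness of the method: at this point $\xi=1$, so $\Re\xi=|\xi|$ holds with equality, and the pair $p_1=4\sqrt{34}/13$, $p_2=2$ is realized in $\mathcal{P}$ by $p(z)=\lambda(1+z)/(1-z)+(1-\lambda)(1-z)/(1+z)$ with $\lambda=(p_1+2)/4$; the corresponding $f\in\mathcal{S}_{S,L}^*$ satisfies $T_{2,1}(F_{f^{-1}}/\Gamma)=15/1352$. Hence the stated upper bound $39/4096$ is false, the sharp constant is $15/1352$, and the extremal function is not $g_2$ (which corresponds to $p_1=2$). The paper's own proof commits precisely the error you left open: its case $(A_1)$ asserts $\delta(p,1)=\frac{1}{65536}(-169p^4+1088p^2-1024)\le\frac{39}{4096}$, which fails near $p\approx1.79$. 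Your instinct that the corner-versus-interior comparison on the edge $x=1$ ``fixes both the final constant and the extremal configuration'' was exactly right; carrying it out overturns the theorem rather than confirming it.
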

\begin{proof}
Since $f\in \mathcal{S}_{S,L}^*$, in view of  \eqref{eq107}, \eqref{eq1} and \eqref{eq1a}, we get
\begin{equation}\label{eq74}
T_{2,1}(	F_{f}/\gamma)=\frac{1}{16}\bigg(-\frac{p_1^4}{4096}+\frac{p_1^2}{16}+\frac{7p_1^4}{1024}-\frac{p_1^2}{128}\Re(p_2)-\frac{1}{1024}|7p_1^2-8p_2|^2\bigg).
\end{equation}
Using Lemma \eqref{lem1}, the expression \eqref{eq74}  becomes
\begin{equation}\label{eq75}
 T_{2,1}(	F_{f}/\gamma)=\frac{1}{65536}(-25p^4+256p^2+80p^2(4-p^2)\Re(\xi)-64(4-p^2)^2|\xi|^2).
\end{equation}
Using $\Re(\xi)\leq|\xi|,$ the above expression becomes
\begin{align*}
 T_{2,1}(	F_{f}/\gamma)&\leq\frac{1}{65536}(-25p^4+256p^2+80p^2(4-p^2)|\xi|-64(4-p^2)^2|\xi|^2)=\delta(p,|\xi|).
\end{align*}
On setting $|\xi|=x\in[0,1]$, we have  $\delta(p,x)=(-25p^4+256p^2+80p^2(4-p^2)x-64(4-p^2)^2x^2)/65536$ for $p\in[0,2]$.  For the maximum value of the function  $\delta$ on $\Delta=[0,2]\times[0,1],$ we consider two cases:
\begin{itemize}
\item[$(A_1)$] On the boundary of $\Delta,$ we note that
\begin{align*}
 &\delta(0,x)=-\frac{x^2}{64}\leq0,\quad\delta(2,x)=\frac{39}{4096},\quad\delta(p,0)=\frac{1}{65536}(-9p^4+256p^2)\leq\frac{39}{4096},\\&\delta(p,1)=\frac{1}{65536}(-169p^4+1088p^2-1024)\leq\frac{39}{4096}.
\end{align*}
\item[ $(A_2)$] In the interior of $\Delta$, the equation  ${\partial \delta(p,x)}/{\partial x}=0$   gives a solution $x=5p^2/(8(4-p^2))=x_p\in(0,1),$ when $p<{\sqrt{\frac{32}{13}}}\in(0,2).$ Further,  we substitute the value $x_p$ in  ${\partial \delta(p,x)}/{\partial p}=0$  gives $2p^5-16p^3+32p=0,$ which is not possible for $p\in(0,2).$
The function $\delta$ has no maximum value in the interior  of $\Delta.$
\end{itemize}
Hence from $(A_1)$ and $(A_2),$ the  upper bound on $T_{2,1}(	F_{f}/\gamma)$ is ${39}/{4096}.$ Sharpness follows for  the function $g_2$ defined by \eqref{eqt72}.

On using  the identity $\Re(\xi)\geq-|\xi|$ and on setting    $x=|\xi|\in[0,1]$ in equation \eqref{eq75},  we have
\begin{align*}
 T_{2,1}(	F_{f}/\gamma)&\geq\frac{1}{65536}(-25p^4+256p^2-80p^2(4-p^2)x-64(4-p^2)^2x^2)=J(p,x).
\end{align*}
Let $(p,x)\in\Delta$ and it is noted that ${\partial J(p,x)}/{\partial x}={-((4-p^2)(5p^2+8x(4-p^2)))}/4096\neq0.$ The system of equation ${\partial J}/{\partial p}=0$ and ${\partial J}/{\partial x}=0$  has no solution. The function $J$ has no minimum value in the interior of $\Delta$. Hence the minimum value of the function $J$ attained on the boundary of $\Delta$ such that
\begin{align*}
 &J(0,x)=-\frac{x^2}{64}\geq-\frac{1}{64},\quad J(2,x)=\frac{39}{4096},\quad J(p,0)=\frac{1}{65536}(-9p^4+256p^2)\geq0,\\&J(p,1)=\frac{1}{4096}(-9p^4+448p^2-1024)\geq-\frac{1}{64}.
\end{align*}
 Thus, the lower bound on $T_{2,1}(	F_{f}/\gamma)$ is  $-{1}/{64}$ which is sharp for the function $g_1$ defined by \eqref{eqt7}. \qedhere
\end{proof}
\section*{Acknowledgement}
The first and the corresponding author would like to thank the Institute of Eminence, University of Delhi, Delhi, India--110007 for providing financial support for this research under grant number /IoE/2021/12/FRP. The second author would like to thank  the UGC Non-NET Fellowship for supporting financially vide Ref. No. Sch/139/Non-NET/Ext-156/2022-2023/722.

\end{document}